\newtheorem{theorem}{Theorem}[section]
\newtheorem{lemma}[theorem]{Lemma}
\newtheorem{proposition}[theorem]{Proposition}
\newtheorem{corollary}[theorem]{Corollary}
\theoremstyle{definition}
\newtheorem{example}[theorem]{Example}
\newtheorem{remark}[theorem]{Remark}
\newcommand{\V}{\mathbb{V}}
\newcommand{\W}{\mathbb{W}}
\newcommand{\K}{\mathcal{K}}
\newcommand{\A}{\mathcal{A}}
\newcommand{\R}{\mathbb{R}}
\newcommand{\C}{\mathbb{C}}
\newcommand{\Sb}{\mathbb{S}}
\newcommand{\U}{\mathrm{U}}
\newcommand{\SO}{\mathrm{SO}}
\newcommand{\OO}{\mathrm{O}}
\newcommand{\Val}{\mathrm{Val}}
\newcommand{\Area}{\mathrm{Area}}
\newcommand{\GL}{\mathrm{GL}}
\newenvironment{proofof}[1]{\noindent{\it Proof of
#1.}}{\hfill$\square$\\\mbox{}}
\begin{document}

\title[Unitary equivariant tensor valuations]{Dimension of the space of unitary equivariant translation invariant tensor valuations} 

\author{K.J. B\"or\"oczky, M. Domokos, G. Solanes}\thanks{ The first two authors were partially supported by National Research, Development and Innovation Office,  NKFIH K 119934, 
K 132002, KH 129630, ANN 121 649. The third author was partially supported by FEDER-MINECO grant PGC2018-095998-B-I00, and the Serra H\'unter Programme.}

\address{Alfr\'ed R\'enyi Institute of Mathematics, Re\'altanoda u. 13-15, H-1053 Budapest, Hungary and CEU, N\'ador u 9, H-1051 Budapest, Hungary}
\address{Alfr\'ed R\'enyi Institute of Mathematics, Re\'altanoda u. 13-15, H-1053 Budapest, Hungary}
\address{Departament de Matem\`atiques\\ Universitat Aut\`onoma de Barcelona\\08193 Bellaterra\\ Spain} 

\begin{abstract} 
Following the work of Semyon Alesker in the scalar valued case and of Thomas Wannerer in the vector valued case,
the dimensions of the spaces of continuous translation invariant and unitary equivariant tensor valuations are computed. In addition, a basis in the vector valued case is presented.
\end{abstract} 
\maketitle

\section{Introduction}

For a real vector space $\V$  of finite  dimension  and an abelian semigroup $(\A,+)$, we write $\K(\V)$ to denote the space of convex bodies in $\V$ (i.e., compact convex sets) equipped with the Hausdorff metric.  
We call an operator $Z:\K(\V)\to\A$ a \emph{valuation} if 
$$
Z(K\cup L)+Z(K\cap L)=Z(K)+Z(L)
$$
holds for any $K,L\in\K(\V)$ satisfying that $K\cup L\in\K(\V)$. Typical choices for the semigroup $\A$ are the field of real numbers $\mathbb R$, or the vector space $\mathbb V$ itself, and more generally the space $\Sb^d(\V)$
of symmetric rank $d$ tensors of $\V$.  Also $\A=\K(\V)$ equipped with Minkowski addition leads to many interesting valuations.

One of the principal goals in the theory of valuations is to obtain characterizations of known operators as the only valuations satisfying certain simple geometric and topological properties.
The fundamental result in this direction goes back to 1952, when Hadwiger proved that, for $\V=\R^n$, the linear combinations of intrinsic volumes are the only continuous real-valued valuations being invariant under rigid motions of $\R^n$ (see \cite{Had57}). 

Hadwiger's result can be generalized in different directions. One of them is to change the group acting on $\K(\V)$ and classify the continuous real-valued translation invariant valuations that are invariant under the linear action of some group $G$. The space of such valuations is finite-dimensional precisely when $G$ acts transitively on the unit sphere \cite{Ale00}. For the first nontrivial case $G=\U(m)$, this was achieved by Alesker \cite{Ale03} and refined by Bernig and Fu \cite{Ber-Fu}. After this breakthough, several other groups have been succesfully studied. We refer the reader to \cite{AbW15,Ber09,Ber11,bernig.fu.solanes,Ber-Sol,Fu06_2,wannerer14} and references therein for some results in this direction. 

Another important generalization of Hadwiger's theorem consists of  
changing the target space $\A$. The case $\A=\Sb^d(\V)$ of tensor-valued valuations is of particular interest and has been thoroughly studied, specially under equivariance  assumptions with respect to orthogonal and special linear groups (see e.g. \cite{ABDK,  bernig.hug, HaP0, HaP,hug.schneider.schuster}). The space of $\U(m)$-equivariant valuations is much less understood, and is the object of the present paper.

Other current research directions in valuation theory include the following. Real-valued and tensor-valued valuations defined on lattice polytopes have been studied in \cite{boroczky.ludwig, LuS17}. A very active area is the study of valuations taking values in the space of convex bodies  (see e.g., \cite{ludwig02,ludwig05} and the references in \cite{ludwig.survey}). Also, important results on valuations defined in several function spaces have been recently obtained (cf. e.g., \cite{Ale17+,boroczky.ludwig,colesanti.ludwig.mussnig,ludwig.survey2}).

\medskip In this paper we begin the study of unitary-equivariant valuations on  complex vector spaces. To state our results precisely, let us introduce some notation.
We denote the space of continuous translation invariant  $\R$-valued 
valuations on $\K(\V)$ by
 $\mathrm{Val}=\Val(\V)$. The subspace of $k$-homogeneous valuations (i.e. such that $Z(\lambda K)=\lambda^k Z(K)$ for any convex body $K$ and $\lambda> 0$) is denoted by 
$\mathrm{Val}_k$. 
Given a  linear action of a closed subgroup $G\subset \GL(\V)$ on  a finite dimensional $\R$-vector space $\W$, 
we say that a valuation $Z:\K(\V)\to  \W$ is \emph{$G$-equivariant} if $Z(\varphi(K))=\varphi Z(K)$ holds for any $\varphi\in G$ and  $K\in \mathcal{K}(\V)$. 
The space of $\W$-valued continuous translation invariant $G$-equivariant valuations is 
 naturally identified with the  subspace   
$(\mathrm{Val}\otimes \W)^G$ of $G$-invariants in $\mathrm{Val}\otimes \W$  
 (the symbol $\otimes$ is used in this paper for tensor products over $\R$).  

We will focus on continuous translation invariant and $\U(m)$-equivariant tensor-valued valuations on $\K(\R^{2m})$ for $m\ge 2$. So in our case,  $\V$ will be $\C^m$, viewed as the real vector space $\R^{2m}$.  The group $G$ will be the unitary group $\U(m)$ with its defining action on $\V=\C^m$, and $\W=\Sb^d(\R^{2m})$ will be the $d^{\mathrm{th}}$ symmetric tensor power over $\R$ of $\V$.   
For the homogenity degree $k=0,\ldots,2m$ of a valuation, we set $\ell=\min\{k,2m-k\}$, 
and write $\lfloor \frac{\ell}2\rfloor$ for the lower integer part of $\frac{\ell}2$. 
In the scalar valued case, Alesker \cite{Ale03} proved that 
${\rm dim}_\R\mathrm{Val}_k^{\U(m)}=1+\lfloor \frac{\ell}2\rfloor$, and
he provided two different sets of bases for $\mathrm{Val}_k^{\U(m)}$.
In the vector valued case, Wannerer \cite{wannerer} obtained 
${\rm dim}_\R(\mathrm{Val}_k\otimes   \R^{2m})^{\U(m)}=2\lfloor \frac{\ell}2\rfloor$ 
 (see also \cite[Theorem  6.14]{schuster}). 

Our main result is the determination of the dimension of the space of 
$\U(m)$-equivariant tensor valued valuations of all ranks:

\begin{theorem}\label{thm:dimensions}
\label{dimensions}
For $m\geq 2$, $k=0,\ldots, 2m $  and $d\geq 0$, using  the notation  
$f:=\lfloor \frac d2 \rfloor$ and $\ell:=\min\{k,{2m}-k\}$, the dimension of
$(\mathrm{Val}_k\otimes  \Sb^d(\R^{2m}))^{\U(m)}$ is as follows: 
$$
\begin{array}{cc||c}
 & & \dim_{\R}((\mathrm{Val}_k\otimes \Sb^d(\R^{2m}))^{\U(m)})\\ \hline \hline 
d=0 &  & 1+\lfloor \frac{\ell}2\rfloor \\
d=2f>0, & \ell=0 & 1 \\
d=2f>0, & 1\le \ell<m & 3\ell f^2+2\lfloor \frac{\ell}{2} \rfloor-2f^2+2f+1 \\
d=2f>0, & \ell=m & 3mf^2+2\lfloor \frac m2 \rfloor -3f^2+2f+1 \\
d=2f+1, & \ell=0 & 0 \\
d=2f+1, & 1\le \ell<m & 3\ell f^2+3\ell f+2\lfloor \frac{\ell}{2}\rfloor-2f^2 \\
d=2f+1, & \ell=m &  3mf^2+3mf + 2\lfloor \frac m2\rfloor-3f^2-f
\end{array} 
$$
\end{theorem}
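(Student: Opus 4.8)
The plan is to pass to representation theory. Since $\U(m)\subset\SO(2m)$ and $\Sb^d(\R^{2m})$ is an orthogonal, hence self-dual, representation, we have
$$\dim_{\R}(\mathrm{Val}_k\otimes\Sb^d(\R^{2m}))^{\U(m)}=\dim_{\C}\bigl(\mathrm{Val}_k\otimes\C\;\otimes\;\Sb^d(\R^{2m})\otimes\C\bigr)^{\U(m)},$$
so we may work with complexified modules throughout. The first ingredient is the Alesker--Bernig--Schuster decomposition of $\mathrm{Val}_k$ into irreducible $\SO(2m)$-modules, $\mathrm{Val}_k\cong\bigoplus_{\lambda}\Gamma_\lambda$, where $\lambda$ runs over Young diagrams with at most $\ell=\min\{k,2m-k\}$ rows and $\lambda_2\neq1$ (for $0<k<2m$; for $\ell=0$ the module is the trivial one). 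Although the sum is infinite, only finitely many summands carry a $\U(m)$-invariant, so
$$\dim(\mathrm{Val}_k\otimes\Sb^d)^{\U(m)}=\sum_\lambda N_d(\lambda),\qquad N_d(\lambda):=\dim\bigl(\Gamma_\lambda\otimes\Sb^d(\R^{2m})\bigr)^{\U(m)},$$
and the problem splits into (a) decomposing $\Sb^d(\R^{2m})$ under $\U(m)$, (b) branching each $\Gamma_\lambda$ from $\SO(2m)$ to $\U(m)$, and (c) pairing the two and summing over $\lambda$.

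For (a): writing $\R^{2m}\otimes\C=V\oplus\bar V$ with $V$ the standard $\U(m)$-module, one gets $\Sb^d(\R^{2m})\otimes\C=\bigoplus_{p+q=d}\Sb^pV\otimes\Sb^q\bar V$, and the classical decomposition of $\Sb^pV\otimes\Sb^q\bar V$ shows that the $\U(m)$-constituents of $\Sb^d(\R^{2m})\otimes\C$ are precisely the irreducibles $F_{s,t}$ of highest weight $(s,0,\dots,0,-t)$ with $s,t\geq0$, $s+t\leq d$ and $s+t\equiv d\pmod2$, each occurring exactly once. Since $\Sb^d(\R^{2m})$ is self-dual and the $F_{s,t}$ are pairwise non-isomorphic, Schur's lemma gives $N_d(\lambda)=\sum_{s,t}\,[\,\Gamma_\lambda|_{\U(m)}:F_{s,t}\,]$, summed over that same set of pairs $(s,t)$. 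In particular, for $\ell=0$ only $\lambda=\emptyset$ survives, $\Gamma_\emptyset$ is trivial $=F_{0,0}$, and $N_d(\emptyset)$ equals $1$ for $d$ even and $0$ for $d$ odd, which already yields the first, second and fifth rows of the table.

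For (b), the heart of the argument: one needs the multiplicity of the very special module $F_{s,t}$ in the restriction $\Gamma_\lambda|_{\U(m)}$, for $\lambda$ in the Alesker--Bernig--Schuster index set. Here one invokes Littlewood's branching rule for the symmetric pair $(\SO(2m),\U(m))$, which expresses $[\Gamma_\lambda|_{\U(m)}:F_{s,t}]$ through Littlewood--Richardson coefficients of the form $c^{\lambda}_{(s),(t),\delta}$, with $\delta$ ranging over a suitable family of `doubled' partitions (even column lengths); because $(s)$ and $(t)$ are single rows, these coefficients collapse to Pieri rules, so $[\Gamma_\lambda|_{\U(m)}:F_{s,t}]$ counts the ways to obtain $\lambda$ from such a $\delta$ by successively adding a horizontal $s$-strip and a horizontal $t$-strip. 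It then remains to count, over all admissible $\lambda$ (at most $\ell$ rows, $\lambda_2\neq1$) and all admissible pairs $(s,t)$, these configurations and to evaluate the resulting summation; since the free data amount to two parameters bounded in terms of $d$, the count is quadratic in $f=\lfloor d/2\rfloor$, matching the shape of the formulas. The main obstacle is exactly this combinatorial bookkeeping: keeping track of which diagrams $\lambda$ are produced and with what multiplicity while respecting $\lambda_2\neq1$, and, in the boundary case $\ell=m$, separately handling the diagrams with exactly $m$ nonzero rows (and the $\SO(2m)$-splitting phenomenon) together with the absence of $\Gamma_{(1^m)}$ — this is precisely what accounts for the distinct $\ell=m$ rows of the table. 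The parity split in $d$ and the split $\ell<m$ versus $\ell=m$ then emerge by simplifying the polynomial expressions obtained from the sum, with the scalar case $d=0$ and Wannerer's vector case $d=1$ serving as consistency checks.
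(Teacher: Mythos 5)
Your overall strategy is the same as the paper's: complexify, use the Alesker--Bernig--Schuster decomposition of $\Val_{k,\C}$ into $\SO(2m)$-irreducibles, decompose $\Sb^d_\C(\R^{2m})$ into a multiplicity-free sum of $\U(m)$-irreducibles $F_{s,t}$ with $s+t\le d$ and $s+t\equiv d\pmod 2$ (this step is correct and agrees with the paper's route through the harmonic decomposition $[d]+[d-2]+\cdots$), and then pair the two via a branching rule and Schur's lemma. But as written the argument has genuine gaps. First, you misquote the ABS index set: the highest weights occurring in $\Val_{k,\C}$ are not ``all diagrams with at most $\ell$ rows and $\lambda_2\neq 1$'' but precisely the partitions of the form $(g,2^h)=(g,2,\dots,2)$ with $g\neq 1$ and at most $\ell$ parts (equivalently: no part equal to $1$ and $\lambda_2\le 2$). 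With your index set the sum over $\lambda$ runs over vastly more diagrams and the count cannot come out right; with the correct one, the Pieri/Littlewood--Richardson constraints force $\lambda\in\{(e),\,(e,2^h),\,(e\pm 2,2^h)\}$ in order to contribute to an $F_{s,t}$ with $s+t=e$, which is exactly what makes the bookkeeping finite. (Also, your $\ell=0$ observation yields the first row of the table only for $\ell=0$; for general $\ell$ the $d=0$ row still requires the full multiplicity count, or a citation of Alesker's theorem.)

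Second, and more seriously, the branching rule you invoke --- multiplicities of $F_{s,t}$ in $\Gamma_\lambda\downarrow^{\SO(2m)}_{\U(m)}$ expressed purely through Littlewood--Richardson coefficients --- is only valid in the stable range. For the partitions $(g,2^{m-1})$ with exactly $m$ nonzero rows, which occur precisely when $\ell=m$, the naive expansion produces symbols $\{\xi;\nu\}$ with too many total rows, and these must be rewritten via King's modification rules; in the relevant cases $\{(j,1^{m-1});(i,1^{m-1})\}=-\{j;i\}$, and these negative contributions are the actual source of the corrections distinguishing the $\ell=m$ rows of the table. Attributing the boundary behaviour to the $\SO(2m)$-splitting phenomenon or to the absence of $(1^m)$ misidentifies the mechanism, and without the modification rules your method would return the wrong numbers for $\ell=m$. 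Finally, the theorem asserts explicit closed formulas, so the ``combinatorial bookkeeping'' you defer is the substance of the proof: one must tabulate the multiplicity of each $\{j;i\}$ in each $[(g,2^h)]\downarrow^{\OO(2m)}_{\U(m)}$, sum over $h$ and then over the constituents of $\Sb^d_\C(\R^{2m})$, and verify the resulting quadratic-in-$f$ polynomials case by case. Until that is carried out and the two issues above are repaired, what you have is a correct plan rather than a proof.
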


To compute these dimensions, we first obtain the multiplicity in $\C\otimes \Val_k$ 
of each irreducible $\U(m)$-module direct summand of  $\C\otimes \Sb^d(\R^{2m})$ 
(see Theorem \ref{prop:table_5}). This intermediate result may also be of independent interest.

Once its dimension is known, a natural goal is to determine a basis for $(\mathrm{Val}_k\otimes \Sb^d(\R^{2m}))^{\U(m)}$.
So far this has been only known for rank $d=0$ by Alesker \cite{Ale03}. Here we construct such a basis 
in the vector valued  case (i.e. for $d=1$). 
To this end, we use the area measures introduced by Wannerer \cite{wannerer} (see Section~\ref{secVecbasis} for definitions). We write $S(\V)$ to denote the unit sphere in a euclidean vector space $\V$,
and ${\rm Area}(\V)$ to denote the space of smooth area measures, which is a certain class of translation invariant valuations taking values in the space of signed measures  of $S(\V)$. To each $\Psi\in{\rm Area}(\V)$, one can assign  the smooth vector valued valuation $C(\Psi)$ defined by
$$
C(\Psi)(K)=\int_{S(\V)}u d\Psi(K,du)
$$
for any convex body $K$. 

For $\V=\C^m$, Wannerer \cite{wannerer} gave a complete description of the space $\Area^{\U(m)}$ of $\U(m)$-equivariant area measures. In particular, he introduced a certain family $\Delta_{k,q}\in\Area^{\U(m)}$ with specially nice properties (see Section~\ref{secVecbasis}).

\begin{theorem}\label{thm:Vecbasis}Consider  the $\C$-vector space structure on $\mathrm{Val}_k\otimes_{\R}\C^m$ 
given by $\alpha(Z\otimes u)=Z\otimes (\alpha u)$ for $\alpha\in \C$, $u\in \C^m$, and 
$Z\in \mathrm{Val}_k$. Then, for $m\geq 2$, a  $\C$-vector space basis of $(\mathrm{Val}_k\otimes_{\R}\C^m)^{\U(m)}$ is given by
the family $C(\Delta_{k,q})$ where  $0\leq k<2 m$ and $\max(0,k- m)< q\leq \frac{k}2$. 
\end{theorem}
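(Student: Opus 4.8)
The plan is to prove that the family $\{C(\Delta_{k,q})\}$ is a basis by establishing two things: first, that the cardinality of this family equals the dimension of $(\Val_k\otimes_\R\C^m)^{\U(m)}$ computed via Theorem~\ref{thm:dimensions}; and second, that the family is $\C$-linearly independent. For the count, one checks that the index set $\{q\in\Z : \max(0,k-m)<q\leq k/2\}$ (together with the reflection $k\leftrightarrow 2m-k$ that preserves $\Val_k$) has exactly $\lfloor\ell/2\rfloor$ elements when $\ell<m$, and the appropriate number when $\ell=m$; combined with the fact that the $\C$-dimension of $(\Val_k\otimes_\R\C^m)^{\U(m)}$ is half the $\R$-dimension of $(\Val_k\otimes\Sb^1(\R^{2m}))^{\U(m)}=(\Val_k\otimes\R^{2m})^{\U(m)}$ (since $\Sb^1(\R^{2m})=\R^{2m}=\C^m$ as a $\U(m)$-module, and the $\C$-structure halves the dimension), this matches the value $2\lfloor\ell/2\rfloor$ from the $d=1$ rows of the table (i.e.\ $\ell=1<m$: $3\ell\cdot 0+3\ell\cdot 0+2\lfloor\ell/2\rfloor-0$, etc.), yielding $\lfloor\ell/2\rfloor$ complex dimensions. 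So the counting step reduces to elementary interval arithmetic.

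For linear independence, the key structural input is the map $C:\Area^{\U(m)}\to(\Val_k\otimes_\R\C^m)^{\U(m)}$ and Wannerer's description of $\Area^{\U(m)}$ with its distinguished basis-like family $\Delta_{k,q}$. The strategy I would use is to exploit the first variation / restriction to a complex subspace: if $\sum_q c_q\,C(\Delta_{k,q})=0$ as a vector-valued valuation on $\K(\C^m)$, I would pair this identity against suitable test bodies (e.g.\ products of a ball in a complex line with lower-dimensional balls, or more generally bodies adapted to a flag of complex subspaces) and against suitable linear functionals on $\C^m$, to extract a triangular (or otherwise invertible) system in the coefficients $c_q$. Concretely, the known action of the $\Delta_{k,q}$ on such bodies — their "Klain-type" or "spherical" data indexed by $q$ — should be linearly independent as functions of $q$, and $C$ only mixes this with the fixed linear-functional factor $u\mapsto\int_{S(\V)}u\,d\Psi$; so one needs that $C$ does not kill any nonzero combination. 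An efficient way to see this: $\Delta_{k,q}$ is characterized by its behaviour under the "derivation" and "restriction" operators of Wannerer, and $C$ intertwines these with the corresponding operators on $\Val\otimes\C^m$; evaluating on a product body $B^2_\epsilon\times K'$ and taking $\epsilon\to 0$ isolates a single $q$.

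The main obstacle I anticipate is precisely this injectivity-up-to-the-known-kernel of $C$ restricted to the span of the $\Delta_{k,q}$: a priori $C$ could have a large kernel (indeed $C$ annihilates any area measure whose vectorial first moment vanishes identically), so one must genuinely use the special form of the $\Delta_{k,q}$ rather than a soft dimension-count on the source. I would handle this by computing $C(\Delta_{k,q})$ explicitly on a one-parameter family of bodies invariant under a maximal torus of $\U(m)$, reducing to an identity among Legendre/Jacobi-type polynomials or among the coefficients appearing in Wannerer's construction, and showing the resulting matrix in $(c_q)$ has nonzero determinant (e.g.\ because it is triangular in a degree filtration coming from the homogeneity in $\epsilon$). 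Once both the count and the independence are in hand, the theorem follows: $\lfloor\ell/2\rfloor$ linearly independent vectors in a space of dimension $\lfloor\ell/2\rfloor$ form a basis. I would also remark that the constraint $q>\max(0,k-m)$ is exactly what is needed for $\Delta_{k,q}$ to be defined and nonzero, and that the upper bound $q\le k/2$ together with the reflection symmetry accounts for the full list without redundancy.
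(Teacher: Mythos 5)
Your overall skeleton matches the paper's: count that the index set has $\lfloor\ell/2\rfloor$ elements, then establish $\C$-linear independence by evaluating the $C(\Delta_{k,q})$ on test bodies, and combine. Your dimension count is correct (and indeed, as the paper notes, the $d=1$ case $\dim_\C=\lfloor\ell/2\rfloor$ was already in Wannerer and in Schuster). You also correctly identify the genuine difficulty: the centroid map $C$ has a large kernel, so one cannot argue soft-dimensionally on the source $\Area^{\U(m)}$ and must actually compute.

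However, the independence step is where the gap is. You list several candidate tactics --- pairing against balls, products $B^2_\epsilon\times K'$ and $\epsilon\to 0$, torus-invariant one-parameter families, Legendre/Jacobi identities --- without carrying any one of them out or showing it yields a triangular system. The paper's actual argument relies on two choices that you do not anticipate and that are not easy to guess: (a) it first passes from the $\Delta_{k,q}$ to the combinations $\Psi_{k,r}=\sum_{i\ge r}\binom{i}{r}\Delta_{k,k-m+i}$, whose globalizations have Klain function equal to a single elementary symmetric polynomial $\sigma_r(\cos^2\theta_1,\dots)$ in the K\"ahler angles of $F^\perp$, which collapses the later computation; (b) it evaluates $C(\Psi_{k,r})$ on specific $(k+1)$-dimensional simplices $T_q\subset\C^q\oplus\R^{k-2q+1}$ with vertices at $0,e_1,\sqrt{-1}e_1,\dots,e_q,\sqrt{-1}e_q,e_{q+1},\dots,e_{k-q+1}$; the key point is that all but one facet of $T_q$ has $F^\perp$ with ``rational'' K\"ahler angles ($0$ or $\pi/2$) and the remaining facet has a single nontrivial angle $\cos^2\theta=\frac{k-2q+1}{k+1}$, so the angular formula \eqref{angular} can be evaluated in closed form and the outcome is visibly zero iff $r>i$ when $q=k-m+i$. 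This gives the triangular matrix with nonzero diagonal. Without some concrete test body of this degree of specificity, the ``$\epsilon\to 0$ isolates one $q$'' heuristic is not substantiated; in particular, balls and torus-invariant bodies tend to average the K\"ahler angle data in a way that obscures rather than isolates individual $q$'s. The reduction from $k<m$ to $k\ge m$ via restriction to $\C^k$ and Proposition~\ref{deltas}(iii), which you gesture at via ``restriction operators,'' is correct in spirit and is indeed what the paper does.

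So: right architecture and a correct count, but the core computation --- the choice of the $\Psi_{k,r}$ and the simplices $T_q$, and the K\"ahler-angle evaluation --- is missing, and the strategies you float in its place would likely not produce a triangular system without substantial additional work.
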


 One obtains the following immediate corollary: 

\begin{corollary}
For $m\geq 2$, an $\R$-vector space 
basis of $(\Val_k\otimes \R^{2m})^{\U(m)}$ is 
$$\{C(\Delta_{k,q}), \   \sqrt{-1}\cdot C(\Delta_{k,q})\ \mid\  0,k-m< q\leq  {k \over 2}\}.$$  
\end{corollary}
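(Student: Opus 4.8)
The corollary follows from Theorem~\ref{thm:Vecbasis} by a standard scalar-restriction argument, so the plan is simply to spell that out carefully. The starting point is the identification $\C^m \cong \R^{2m}$ as real vector spaces that underlies the whole paper, which gives $\Val_k \otimes_\R \C^m = \Val_k \otimes_\R \R^{2m}$ as $\R$-vector spaces and, because the $\U(m)$-action on the $\C^m$ factor is $\R$-linear, an equality of the invariant subspaces $(\Val_k \otimes_\R \C^m)^{\U(m)} = (\Val_k \otimes_\R \R^{2m})^{\U(m)}$. So the task is to convert the $\C$-basis of the left-hand side provided by Theorem~\ref{thm:Vecbasis} into an $\R$-basis.

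\medskip
The key observation is that the complex structure used in Theorem~\ref{thm:Vecbasis}, namely $\alpha(Z\otimes u) = Z\otimes(\alpha u)$, is genuinely a $\C$-vector space structure on $\Val_k\otimes_\R\C^m$ that is compatible with the $\U(m)$-action: multiplication by $\sqrt{-1}$ on the $\C^m$ factor commutes with every $\varphi\in\U(m)$ (indeed $\varphi$ is $\C$-linear on $\C^m$), hence $\sqrt{-1}$ preserves the invariant subspace $(\Val_k\otimes_\R\C^m)^{\U(m)}$, which is therefore a $\C$-subspace. Now the general linear-algebra fact I would invoke is: if $W$ is a finite-dimensional $\C$-vector space with $\C$-basis $w_1,\dots,w_r$, then, viewing $W$ as an $\R$-vector space, the $2r$ elements $w_1,\dots,w_r,\sqrt{-1}\,w_1,\dots,\sqrt{-1}\,w_r$ form an $\R$-basis. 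Applying this with $W = (\Val_k\otimes_\R\C^m)^{\U(m)}$ and the basis $\{C(\Delta_{k,q})\}$ from Theorem~\ref{thm:Vecbasis} yields exactly the claimed $\R$-basis $\{C(\Delta_{k,q}),\ \sqrt{-1}\cdot C(\Delta_{k,q})\}$, where $q$ ranges over $\max(0,k-m) < q \le k/2$.

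\medskip
Two small points would need a line of justification. First, one should note that $\sqrt{-1}\cdot C(\Delta_{k,q})$, a priori an element of $\Val_k\otimes_\R\C^m$, does lie in $\Val_k\otimes_\R\R^{2m}$ and is again of the form $C(\Psi)$ (or at least is an honest $\R^{2m}$-valued valuation) — this is immediate since $\sqrt{-1}$ acts as an $\R$-linear, indeed orthogonal, endomorphism $J$ of $\R^{2m}=\C^m$, so $\sqrt{-1}\cdot C(\Delta_{k,q}) = J\circ C(\Delta_{k,q})$ is a translation invariant $\U(m)$-equivariant valuation because $J$ commutes with $\U(m)$. Second, one should observe that the dimension count is consistent: the number of admissible pairs $(k,q)$ is $\lfloor\ell/2\rfloor$ by the range $\max(0,k-m)<q\le k/2$ together with $\ell=\min\{k,2m-k\}$, so the real basis has $2\lfloor\ell/2\rfloor$ elements, matching Wannerer's dimension formula $\dim_\R(\Val_k\otimes\R^{2m})^{\U(m)} = 2\lfloor\ell/2\rfloor$ recalled in the introduction. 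There is no real obstacle here: the only thing to be careful about is keeping the two vector-space structures (the $\R$-structure on $\R^{2m}$ and the auxiliary $\C$-structure on the $\C^m$ factor) clearly separated, and confirming that the $\U(m)$-action respects the latter — which it does precisely because $\U(m)\subset\GL_\C(\C^m)$.
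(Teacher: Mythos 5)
Your proposal is correct and is exactly the ``immediate'' argument the paper has in mind: Theorem~\ref{thm:Vecbasis} gives a $\C$-basis, the auxiliary complex structure $\alpha(Z\otimes u)=Z\otimes(\alpha u)$ commutes with $\U(m)\subset\GL_\C(\C^m)$ so the invariant subspace is a $\C$-subspace, and the standard fact that a $\C$-basis $w_1,\dots,w_r$ yields the $\R$-basis $w_1,\dots,w_r,\sqrt{-1}\,w_1,\dots,\sqrt{-1}\,w_r$ then produces the claimed family. The paper states the corollary without proof, and your write-up supplies precisely the routine verification that was omitted.
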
 

\section{Branching rules}\label{sec:modification} 

Let us recall some background material from representation theory. 
For technical reasons we shall work with complex representations.  
So given a compact Lie group $G$, by a {\it $G$-module} we shall mean a   finite dimensional complex vector space $V$ endowed with an action of $G$ via $\C$-linear transformations, such that the corresponding group homomorphism $G\to \GL(V)$ is smooth.  
Write $\overline{V}$ for the isomorphism class of $V$. The set of isomorphism classes of $G$-modules is a commutative monoid with addition given by $\overline{V}+\overline{W}=\overline{V\oplus W}$. 
The {\it Grothendieck group} of this monoid consists of formal differences of isomorphism classes of $G$-modules. It is a free abelian group $R_G$ freely generated by $\tau_G$, the set of isomorphism classes of irreducible $G$-modules. 
In fact $R_G$ is a ring, called the {\it representation ring of $G$}, with multiplication 
given by $\overline{V}\cdot \overline{W}=\overline{V\otimes W}$. 
For ease of language or notation we shall frequently 
identify the isomorphism class $\overline{V}\in R_G$ with a $G$-module $V$ representing the isomorphism class $\overline{V}$.  
We shall denote by 
$V\downarrow^G_H$ the restriction of the $G$-module $V$ to a subgroup $H$ of $G$.

\bigskip
Let us briefly sketch the strategy in the proof of Theorem \ref{thm:dimensions}.
We will consider the complexifications 
\begin{align*}\mathrm{Val}_{k,\C}&:=\C\otimes\mathrm{Val}_k \\ 
\Sb_{\C}^d(\R^{2m})&:=\C\otimes \Sb^d(\R^{2m}).
\end{align*} 
Note that while $\R^{2m}$ is an irreducible real representation of $\U(m)$, its complexification $\Sb_{\C}^1(\R^{2m})=\C\otimes \R^{2m}$ is the direct sum of the natural 
$\U(m)$-module $\C^m$ and its dual (over $\C$). It follows that $\Sb_{\C}^d(\R^{2m})$ is a self-dual $\U(m)$-representation.

Clearly
\begin{equation}\label{eq:real_complex}
 \dim_\R(\Val_k\otimes\Sb^d(\R^{2m}))^{\U(m)}= \dim_\C(\Val_{k,\C}\otimes_\C\Sb^d_\C(\R^{2m}))^{\U(m)}.
\end{equation}
Using that $\Sb^d_\C(\R^{2m})$ is a self-dual $\U(m)$-representation, we obtain the standard isomorphism
\begin{equation}\label{eq:tensor_hom}
(\Val_{k,\C}\otimes_\C\Sb^d_\C(\R^{2m}))^{\U(m)}\cong 
\mathrm{Hom}_{\U(m)}(\Sb^d_\C(\R^{2m}),\mathrm{Val}_{k,\C}).
\end{equation}

Starting from known decompositions into irreducible $\SO(2m)$-modules, and restricting those to $\U(m)$, we will determine the decomposition into irreducible summands of the $\U(m)$-modules $\Val_{k,\C}$ and $\Sb_\C^d(\R^{2m})$. Combining \eqref{eq:real_complex} and \eqref{eq:tensor_hom}, and using Schur's Lemma will then yield the dimension of $(\mathrm{Val}_k\otimes \Sb^d(\R^{2m}))^{\U(m)}$.

Next we turn to a parametrization of $\tau_{\OO(2m)}$ based on partitions, where $\OO(2m)$ is the full orthogonal group. 
This material can be found for example in \cite{procesi}, \cite{goodman-wallach}, 
\cite{howe-tan-willenbring} (the notation in these sources is different, and they mainly work in the context of complex linear algebraic groups and not with compact Lie groups).  
Set 
\[\Pi_m=\{\lambda=(\lambda_1,\dots,\lambda_m)\mid \lambda_1\ge\cdots \ge \lambda_m\ge 0, 
\quad \lambda_i\in \mathbb{Z}\}.\]
For $p\le m$ we shall treat $\Pi_p$ as a subset of $\Pi_m$, by identifying 
$(\lambda_1,\dots,\lambda_p)\in \Pi_p$ with $(\lambda_1,\dots,\lambda_p,0,\dots,0)$. 
Set 
\[\Pi_m^+=\{\lambda\in \Pi_m\mid \lambda_m>0\}.\]  
Now we have 
\[\tau_{\OO(2m)}=\{[\lambda],\ [\lambda]^\circ, \ [\mu]\mid 
\mu\in \Pi_m^+,\ \lambda\in \Pi_m\setminus \Pi_m^+\}.\]
Here $[\lambda]^\circ=[\lambda]\cdot \varepsilon$, where $\varepsilon$ is the $1$-dimensional $\OO(2m)$-module given by the determinant. Note that \cite[p. 418, Theorem 2]{procesi} labels the elements of $\tau_{\OO(2m)}$ by partitions such that the sum of the lengths of the first two columns of their Young diagram is at most $2m$. 
Denoting by $\lambda'_1$ the length of the first column of the Young diagram of $\lambda$, our $[\lambda]^\circ$ corresponds to the partition whose Young diagram is obtained by replacing the first column of the Young diagram of $\lambda$ by a column of length $2m-\lambda'_1$. For example, for the partition $\lambda=(1^k)=(1,\dots,1)$ (with $k$ components $1$ for some $0\le k\le m$), we have that $[\lambda]$ is the $k^{\mathrm{th}}$ exterior power of the 
natural $\OO(2m)$-module $\C^{2m}$, and   $[\lambda]^\circ$ is the 
$(n-k)^{\mathrm{th}}$ exterior power of the natural $\OO(2m)$-module $\C^{2m}$. 
We mention that the irreducible $\OO(2m)$-modules are all defined over $\R$, that is, they are complexifications of irreducible real $\OO(2m)$-modules. 

Next we turn to the unitary group   $\U(m)=\OO(2m)\cap \GL(\C^m)$, where  $\GL(\C^m)$ is the complex general linear group.
We have 
\[\tau_{\U(m)}=\{\{ \mu;\lambda\}\mid \lambda\in \Pi_p^+,\ 
\mu\in\Pi_q^+, \ p+q\le m\}.\] 
Here $\{ \mu;\lambda\}$ is the irreducible $\U(m)$-module with highest weight 
\[(\lambda_1,\dots,\lambda_p,0,\dots,0,-\mu_q,\dots,-\mu_1).\]   
This is the restriction to the maximal compact subgroup $\U(m)$ of the general linear group $\GL(\C^m)$ of its representation obtained by tensoring by the $(-\mu_1)^{\mathrm{th}}$ power of the determinant representation  the Schur module associated with the partition $(\lambda_1+\mu_1,\lambda_2+\mu_1,\dots)\in \Pi_m$ (cf. \cite[p. 278,  (8.1.3)]{procesi}).     
For example, the natural $\U(m)$-module $\C^m$ is $\{0;1\}$, where 
in order to simplify the notation we write 
$\{j;i\}:=\{(j,0,\dots,0);(i,0,\dots,0)\}$. The dual of the  natural $\U(m)$-module $\C^m$ is 
$\{1;0\}$, and the $k^{\mathrm{th}}$ exterior power of $\C^m$ is $\{(0);(1^k)\}$.

In certain formulae below elements of $R_{\U(m)}$ denoted by $\{ \mu;\lambda\}$ where 
$\lambda\in\Pi_p^+$, $\mu\in\Pi_q^+$, and $p+q>m$  will also occur. They can be expressed as an integral linear combination of elements in $\tau_{\U(m)}$ by a repeated application of the following modification rule given by King \cite[p. 433]{king}, see also \cite[Section 3]{black-etal}:   
 
Set $h=p+q-m-1$. In case it is possible to remove a boundary strip of $h$ boxes from the Young diagram of $\lambda$, starting at the foot of the first column, and we obtain  a Young diagram of a partition, then we denote this partition by $\lambda-h$. 
Otherwise we say that $\lambda-h$ does not exist. Similarly we define $\mu-h$. 
Now 
\begin{equation}\label{eq:modification}
\{ \mu;\lambda\}=\begin{cases}0\in R_{\U(m)}& \text{ if any of }\lambda-h,\mu-h \text{ does not exist};\\
(-1)^{x+y-1}\{\mu-h;\lambda-h\} &\text{ if both }\lambda-h,\mu-h \text{ exist}
\end{cases}
\end{equation}  
where the boundary $h$-strip removed from the Young diagram of $\lambda$ ends in the $x^{\mathrm{th}}$ column, and  the boundary $h$-strip removed from the Young diagram of $\mu$ ends in the $y^{\mathrm{th}}$ column. 
Note that in the special case $p+q=m+1$, i.e. when $h=0$, the outcome of the above rule is $\{ \mu;\lambda\}=-\{ \mu;\lambda\}$, which implies $\{ \mu;\lambda\}=0$ 
whenever $p+q=m+1$. 
 
\begin{example} 
We draw the Young diagram of the partitions $(4,1,1,1)$ and $(3,3,2,1)$ with a boundary $3$-strip (formed by the boxes denoted by $\star$), and the Young diagram of these partitions with a boundary $2$-strip:  

\vspace{0.5cm}
 \begin{ytableau}
 {}&{} &{} &{} \\
\star \\ \star  \\ 
\star
\end{ytableau} \qquad
\begin{ytableau}
{}& {}&{} \\
 {}&{} &{}  \\ 
\star & \star \\
\star 
\end{ytableau} \qquad
 \begin{ytableau}
 {}&{} &{} &{} \\
{}   \\ 
\star \\
\star
\end{ytableau} \qquad
\begin{ytableau}
{}& {}&{} \\
 {}&{} &{}  \\ 
\star & {} \\
\star 
\end{ytableau} 

\medskip
\begin{align*}\text{Thus we have }\{(4,1,1,1);(3,3,2,1)\}=\{(4);(3,3)\}\in R_{U(4)} \\ 
\text{whereas }\{(4,1,1,1);(3,3,2,1)\}=0\in R_{U(5)}. \end{align*}   
\end{example} 

For a triple of partitions $\lambda,\mu,\nu\in \Pi_m$ denote by $c^{\lambda}_{\mu,\nu}$ the associated Littlewood-Richardson coefficient (cf. e.g. \cite[A.8]{fulton-harris}).  It is determined by the equality 
\[\{ 0;\mu\}\cdot \{ 0;\nu\}=\sum_{\lambda\in \Pi_m}c^{\lambda}_{\mu,\nu}\{ 0;\lambda\}\in R_{\U(m)}.\] 
The equality $\{ 0;\mu\}\cdot \{ 0;\nu\}=\{ 0;\nu\}\cdot \{ 0;\mu\}$ implies 
$c^{\lambda}_{\mu,\nu}=c^{\lambda}_{\nu,\mu}$. 

\begin{theorem} [King \cite{king}]\label{thm:king_branching}
For any $[\lambda]\in\tau_{\OO(2m)}$ we have 
\begin{equation}
\label{eq:king-branching}
[\lambda]\downarrow^{\OO(2m)}_{\U(m)}=
\sum_{\nu,\mu,\xi,(2\beta)'\in\Pi_m}
c^{\lambda}_{\mu,\nu}c^{\mu}_{\xi,(2\beta)'}\{ \xi;\nu\} \in R_{\U(m)}
\end{equation} 
where $(2\beta)'$ stands for a partition such that all columns of its Young diagram 
have even length (i.e. the transpose of a partition with even parts).  
\end{theorem}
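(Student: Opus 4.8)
The plan is to recall the computation of King \cite{king}, which is carried out with characters viewed as symmetric Laurent polynomials. First I would reduce the asserted equality in $R_{\U(m)}$ to an identity of characters on the standard maximal torus $T$ of $\U(m)$; this is legitimate because the irreducible $\U(m)$-characters are linearly independent as functions on $T$. It is worth isolating the geometry of the inclusion $\U(m)\subset\OO(2m)$: complexifying $\R^{2m}$ and splitting it into the $\pm\sqrt{-1}$-eigenspaces $W_{\pm}$ of the complex structure gives $\C^{2m}=W_+\oplus W_-$ with $W_\pm$ isotropic and mutually dual for the (complexified) bilinear form, and $\U(m)=\OO(2m)\cap\GL(W_+)$ is exactly the compact form of the Levi subgroup $\GL(W_+)\cong\GL(m,\C)$ of the stabilizer of the maximal isotropic subspace $W_+$ in $\OO(2m,\C)$. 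Hence $[\lambda]\downarrow^{\OO(2m)}_{\U(m)}$ is simply the holomorphic $\OO(2m,\C)$-module $[\lambda]$ restricted to this Levi and then to $\U(m)$, so it suffices to solve the classical Levi-branching problem $\OO(2m,\C)\downarrow\GL(m,\C)$. (Alternatively one may run the argument through the seesaw pair $\bigl(\OO(2m,\C),\ \GL(W_+)\times\GL(W_-)\bigr)$ inside $\GL(2m,\C)$ together with the Littlewood restriction rule for $\GL(2m)\downarrow\OO(2m)$.) Since $\varepsilon=\det$ is trivial on $\U(m)\subset\SO(2m)$, one has $[\lambda]^{\circ}\downarrow_{\U(m)}=[\lambda]\downarrow_{\U(m)}$, so $\lambda$ may be taken in $\Pi_m$ throughout.

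Evaluating characters at a torus element with eigenvalues $z_1^{\pm1},\dots,z_m^{\pm1}$, the character of $[\lambda]\downarrow_{\U(m)}$ is the orthogonal Weyl character $\chi^{\OO(2m)}_{[\lambda]}(z)$, a symmetric Laurent polynomial in $z_1,\dots,z_m$, whereas the character of $\{\xi;\nu\}$ is the rational Schur function associated with the weight $(\nu_1,\dots,\nu_p,0,\dots,0,-\xi_q,\dots,-\xi_1)$. Writing $\chi^{\OO(2m)}_{[\lambda]}$ by the Weyl character formula as a quotient of alternating sums over the hyperoctahedral Weyl group $W(D_m)=S_m\ltimes(\mathbb{Z}/2)^{m-1}$, and the rational Schur functions as quotients of $S_m$-alternating sums with the Vandermonde $\prod_{i<j}(z_i-z_j)$ in the denominator, I would clear denominators. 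The crucial elementary fact is that the ratio of the $D_m$ Weyl denominator to the Vandermonde equals a monomial times $\prod_{1\le i<j\le m}(1-z_i^{-1}z_j^{-1})^{-1}$, and by the classical Littlewood identity $\prod_{i<j}(1-w_iw_j)^{-1}=\sum_{\beta}s_{(2\beta)'}(w)$ this is precisely where the partitions $(2\beta)'$ with columns of even length enter. Splitting the numerator alternating sum along $W(D_m)=S_m\ltimes(\mathbb{Z}/2)^{m-1}$, the sum over even numbers of sign changes combines with the factor just described, and matching the outcome against the $S_m$-alternating sums of the rational Schur functions — using their orthogonality and Pieri-type rearrangement to dominant exponent sequences — yields exactly the two Littlewood-Richardson coefficients $c^\lambda_{\mu,\nu}$ and $c^\mu_{\xi,(2\beta)'}$ (here $\mu$ is the intermediate partition occurring in the product $s_\nu\,s_\xi\,s_{(2\beta)'}$). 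Equivalently, one checks that the multiplicity of $\{\xi;\nu\}$ is $\langle s_\lambda,\ s_\nu\,s_\xi\prod_{i<j}(1-x_ix_j)^{-1}\rangle=\sum_{\mu,\beta}c^\lambda_{\mu,\nu}\,c^\mu_{\xi,(2\beta)'}$. Any term $\{\xi;\nu\}$ arising with $\xi\in\Pi_q^+$, $\nu\in\Pi_p^+$ and $p+q>m$ is then brought into $\tau_{\U(m)}$ by repeated use of the modification rule \eqref{eq:modification}.

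The step I expect to be the main obstacle is the sign bookkeeping: both the signs $\mathrm{sgn}(w)$, $w\in W(D_m)$, and the signs picked up when re-sorting Laurent exponent sequences into dominant form enter a priori, and one must check that they cancel so that only the manifestly nonnegative coefficients $c^\lambda_{\mu,\nu}c^\mu_{\xi,(2\beta)'}$ remain — equivalently, that the whole alternating-sum manipulation collapses to the double Littlewood-Richardson expansion above. As all of this is done in \cite{king}, in this paper we simply invoke that reference (and \cite[Section 3]{black-etal} for the modification rule).
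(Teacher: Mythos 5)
This is correct and ultimately takes the same route as the paper, whose ``proof'' is just the pointer to King \cite[p.~440, (4.21)]{king} (with \cite[Section 3]{black-etal} for the modification rule), as in your final paragraph. Your accompanying sketch of King's derivation — the Levi reduction via $\U(m)\subset\GL(W_+)\subset\OO(2m,\C)$, the Littlewood identity $\prod_{i<j}(1-w_iw_j)^{-1}=\sum_\beta s_{(2\beta)'}(w)$, and the stable-multiplicity expression $\langle s_\lambda,\,s_\nu s_\xi\prod_{i<j}(1-x_ix_j)^{-1}\rangle=\sum_{\mu,\beta}c^\lambda_{\mu,\nu}c^\mu_{\xi,(2\beta)'}$ — is an accurate extra not present in the paper (one small slip: the $D_m$ denominator equals the Vandermonde times $\prod_{i<j}(1-z_i^{-1}z_j^{-1})$, not its reciprocal; the reciprocal enters only after the denominator is moved across).
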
 
The above statement can be found in \cite[p. 440, (4.21)]{king}; the notation $B$ used there is explained at \cite[p. 435-436]{king}.   
For fixed $\xi,\nu,\beta\in \Pi_m$ we have 
\begin{align*}
\sum_{\lambda,\mu\in \Pi_m}c^{\mu}_{\xi,\nu}c^{\lambda}_{\mu,\beta} \{ 0;\lambda\}
&=(\{ 0;\xi\}\cdot \{ 0;\nu\})\cdot \{ 0;\beta\} \\ 
&=(\{ 0;\xi\}\cdot \{ 0;\beta\})\cdot \{ 0;\nu\}
=\sum_{\lambda,\mu\in \Pi_m}c^{\mu}_{\xi,\beta}c^{\lambda}_{\mu,\nu}
 \{ 0;\lambda\}. 
\end{align*} 
It follows that for each $\lambda,\xi,\nu,\beta\in \Pi_m$ we have 
\[\sum_{\mu\in \Pi_m}c^{\mu}_{\xi,\nu}c^{\lambda}_{\mu,\beta}
=\sum_{\mu\in \Pi_m}c^{\mu}_{\xi,\beta}c^{\lambda}_{\mu,\nu}.\] 
So \eqref{eq:king-branching} can be rewritten as 
\begin{equation}\label{eq:2-king-branching}
[\lambda]\downarrow^{\OO(2m)}_{\U(m)}=
\sum_{\nu,\mu,\xi,(2\beta)'\in\Pi_m}
c^{\mu}_{\xi,\nu}c^{\lambda}_{\mu,(2\beta)'}\{ \xi;\nu\} \in R_{\U(m)}. 
\end{equation} 
We note that in the special case when 
$\lambda,\xi,\nu\in \Pi_{\lfloor \frac m2\rfloor}$, 
the same formula for the multiplicity  of $\{\xi;\nu\}$ as a summand in 
$[\lambda]\downarrow^{\OO(2m)}_{\U(m)}$ appears also in \cite[Section 2.3.1]{howe-tan-willenbring}.    

For the complexification of the symmetric tensor power $\Sb^d(\R^{2m})$ of the defining 
$\OO(2m)$-module $\R^{2m}$ we have 
\begin{equation}\label{eq:symmpower-decomp}
\overline{\Sb_{\C}^d(\R^{2m})}=[d]+[d-2]+[d-4]+\cdots,
\end{equation} 
where $d\in\mathbb{N}_0$ is identified with $(d,0,\dots,0)\in \Pi_m$ 
(see e.g. \cite[Section 5.2.3]{goodman-wallach}). 

\begin{lemma}\label{lemma:harmonic-decomp}  
For $m\ge 2$ we have 
\begin{equation*}
[d]\downarrow^{\OO(2m)}_{\U(m)}=\sum_{i+j=d}\{ j;i\}  
\end{equation*} 
where in order to simplify the notation we write 
\[\{j;i\}:=\{(j,0,\dots,0);(i,0,\dots,0)\}.\] 
\end{lemma}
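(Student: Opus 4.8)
The plan is to specialize King's branching rule (Theorem~\ref{thm:king_branching}) to the one-row partition $\lambda=(d)\in\Pi_m$ and to observe that the Littlewood--Richardson coefficients occurring in it force all the summation indices to be one-row partitions, so that the sum collapses to the asserted multiplicity-free expression. In the formula $[d]\downarrow^{\OO(2m)}_{\U(m)}=\sum c^{(d)}_{\mu,\nu}\,c^{\mu}_{\xi,(2\beta)'}\{\xi;\nu\}$, I would analyse the two Littlewood--Richardson factors one at a time.

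First, $c^{(d)}_{\mu,\nu}$ is the multiplicity of $s_{(d)}$ in $s_\mu s_\nu$; since $s_\mu s_\nu$ is a nonnegative combination of Schur functions $s_\lambda$ with $\lambda\supseteq\mu$ and $\lambda\supseteq\nu$, and the only partition contained in the one-row shape $(d)$ is again a one-row shape, this vanishes unless $\mu=(a)$ and $\nu=(b)$ with $a+b=d$, in which case the Pieri rule gives $c^{(d)}_{(a),(b)}=1$. Fixing such a $\mu=(a)$, the same argument applied to $c^{(a)}_{\xi,(2\beta)'}$ shows that it vanishes unless $\xi$ and $(2\beta)'$ are one-row partitions contained in $(a)$; but $(2\beta)'$ has all columns of even length, which is impossible for a nonempty single row, so $(2\beta)'=\emptyset$, hence $\beta=\emptyset$, and then $c^{(a)}_{\xi,\emptyset}=1$ exactly when $\xi=(a)$.

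Substituting these in collapses the sum to $\sum_{a+b=d}\{(a);(b)\}$, which after relabelling is $\sum_{i+j=d}\{j;i\}$. Because $m\ge 2$, every label $\{(a);(b)\}$ that appears has at most one nonzero part in each of its two partitions, so $p+q\le 2\le m$ and it already lies in $\tau_{\U(m)}$; in particular the modification rule \eqref{eq:modification} is never invoked and no cancellation takes place, giving the claimed equality. The only step that is not a mechanical application of the Pieri rule is the vanishing of the even-column term $(2\beta)'$, and this is precisely what makes the restriction multiplicity-free; I expect it to be the single point requiring any care. An alternative, less combinatorial route would identify $[d]$ with the space of degree-$d$ spherical harmonics on $\R^{2m}\cong\C^m$ and decompose it into harmonic polynomials of bidegree $(i,j)$, but the computation via Theorem~\ref{thm:king_branching} is shorter here.
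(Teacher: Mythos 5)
Your proof is correct and follows essentially the same route as the paper: specialize King's branching rule to $\lambda=(d)$, use the containment constraint from the Littlewood--Richardson rule to force all partitions in the sum to be single rows (hence killing the even-column factor $(2\beta)'$), and finish with Pieri's rule. The only cosmetic difference is that you work with \eqref{eq:king-branching} directly and so pin down $\mu,\nu$ before $\xi,(2\beta)'$, whereas the paper applies the rewritten form \eqref{eq:2-king-branching} and fixes $\mu,(2\beta)'$ first; the constraints and the resulting multiplicity-free decomposition are identical.
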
 

\begin{proof} 
By the Littlewood-Richardson Rule (see for example 
\cite[p. 498, Theorem]{procesi} or \cite[p.456, A.8]{fulton-harris}) we have that 
if $c^{\lambda}_{\alpha,\beta}\neq 0$ for some $\lambda,\alpha,\beta\in \Pi_m$, 
then $\alpha_1\le\lambda_1,\dots,\alpha_m\le \lambda_m$, and similarly $\beta_1\le\lambda_1,\dots,\beta_m\le\lambda_m$. Moreover, $\sum_{i=1}^m\lambda_i=
\sum_{i=1}^m \alpha_i+\sum_{i=1}^m\beta_i$. 
Apply \eqref{eq:2-king-branching} for the special case $\lambda=(d)$. 
By the above remark, $c^{(d)}_{\mu,(2\beta)'}\neq 0$ holds only if $\beta=(0)$ and $\mu=(d)$, and in this case $c^{(d)}_{\mu,(2\beta)'}=1$. So  \eqref{eq:2-king-branching} 
reduces to 
$[d]\downarrow^{\OO(2m)}_{\U(m)}=\sum_{\xi,\nu\in \Pi_m}c^{(d)}_{\xi,\nu}$. 
Again by the above remark, $c^{(d)}_{\xi,\nu}\neq 0$ holds if and only if 
$\xi=(j)$ and $\nu=(d-j)$ for some $j\in\{0,\dots,d\}$, and $c^{(d)}_{(j);(d-j)}=1$ 
by Pieri's rule (see e.g. \cite[p.455, A.7]{fulton-harris}). 
\end{proof} 

\begin{lemma}\label{lemma:hookremoval} 
Fix non-negative integers $i,j$, and assume that $m\ge 2$. 
\begin{itemize}
\item[(i)] Suppose 
$\xi -h=(j,0^{m-1})$ and $\nu-h=(i,0^{m-1})$, 
where $\xi\in \Pi_p^+$, $\nu\in \Pi_q^+$, $p,q\le m$, and $h=p+q-m-1$. 
Then $j>0$, $i>0$, 
\[\xi=(j,1^{m-1}), \quad \nu=(i,1^{m-1}), \]and in this case $\{\xi-h;\nu-h\}=-\{ j;i\}.$
\item[(ii)] There is no $\xi\in \Pi_m$ and positive integer $h$ such that 
$\xi-h=(j,1^{m-1})$. 
\end{itemize}
\end{lemma}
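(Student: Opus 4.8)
The plan is to unwind the definition of the operation $\rho\mapsto\rho-h$ (delete a connected border strip of $h$ boxes whose lowest box is the foot of the first column, keeping the result a partition) and reduce both parts to elementary facts about border strips. As a preliminary reduction, note that we may assume $h\ge 1$: if $h=0$ then $\xi-0=\xi=(j,0^{m-1})$ and $\nu-0=\nu=(i,0^{m-1})$ have at most one positive part each, so $p,q\le 1$ and $h=p+q-m-1\le 1-m<0$ for $m\ge 2$, a contradiction.

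For part (i) the argument runs as follows. Since $\xi\in\Pi_p^+$ has exactly $p$ positive parts and $\xi-h=(j,0^{m-1})$ has at most one, the removed strip $B$ is exactly the skew shape $\xi/(j)$; in particular $j\le\xi_1$, and $B$ contains all of rows $2,\dots,p$ together with the $\xi_1-j$ rightmost boxes of row $1$. First one checks $p\ge 2$: for $p=1$ the only border strip of $\xi=(\xi_1)$ through $(1,1)$ whose complement is a partition is all of $\xi$, forcing $h=\xi_1=q-m\le 0$. The key step, and the one needing the most care, is to show $B$ does not meet row $1$, i.e. $\xi_1=j$: the absence of a $2\times2$ block forces $\xi_3=\dots=\xi_p=1$, and if $\xi_1>j$ then connectedness of $B$ together with the no-$2\times2$ condition in rows $1$ and $2$ force $\xi_2=j+1$, whence $|B|=(\xi_1-j)+(j+1)+(p-2)=\xi_1+p-1=h$ and so $\xi_1=q-m\le 0$, impossible. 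Once $\xi_1=j$ we have $B=\{(r,c):2\le r\le p,\ 1\le c\le\xi_r\}$ with $\xi_3=\dots=\xi_p=1$, and the count $|B|=\xi_2+p-2=h=p+q-m-1$ gives $\xi_2=q-m+1\le 1$; since $\xi_2\ge 1$ this forces $\xi_2=1$, $q=m$ and $\xi=(j,1^{p-1})$, with $j=\xi_1\ge\xi_2=1>0$. Running the identical argument with $(\nu,q,i)$ in place of $(\xi,p,j)$ ($h$ being symmetric in $p,q$) gives $i>0$, $p=m$ and $\nu=(i,1^{q-1})$; together, $p=q=m$, $h=m-1$, $\xi=(j,1^{m-1})$ and $\nu=(i,1^{m-1})$. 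For the sign: by the above the strip removed from $\xi$ is $\{(r,1):2\le r\le m\}$, and likewise for $\nu$, so both strips lie entirely in the first column; thus $x=y=1$ in the modification rule \eqref{eq:modification}, $(-1)^{x+y-1}=-1$, and $\{\xi;\nu\}=-\{\xi-h;\nu-h\}=-\{j;i\}$.

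Part (ii) is short. If $\xi\in\Pi_m$ and $h\ge 1$ satisfy $\xi-h=(j,1^{m-1})$, note first that $(j,1^{m-1})$ is a partition only when $j\ge 1$, and then it has exactly $m$ positive parts. Since removing a border strip only deletes boxes, $\xi\supseteq(j,1^{m-1})$, so $\xi$ has at least $m$ rows; being in $\Pi_m$ it has exactly $m$. Then the first column of $\xi$ has length $m$, so the removed strip contains its foot $(m,1)$; but $(m,1)$ belongs to $(j,1^{m-1})=\xi-h$, contradicting that a deleted box cannot lie in the result. Hence no such $\xi$, $h$ exist.

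I expect the only real obstacle to be the case analysis in part (i): ruling out that the border strip meets the first row, and then combining the two size constraints coming from $\xi$ and from $\nu$ with the bounds $p,q\le m$ to conclude $p=q=m$. Everything else is a direct consequence of the shape of a border strip (connected, no $2\times2$) and a box count, and part (ii) is immediate.
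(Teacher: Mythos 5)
Your proof is correct and follows essentially the same strategy as the paper's: bound the size of the removed boundary strip against the constraint $h=p+q-m-1$ with $p,q\le m$, conclude $p=q=m$, $h=m-1$, and hence that the strip is exactly the first column below row~$1$; then read off the sign from $x=y=1$ in the modification rule. The paper's argument is terser — it immediately observes that all of the at least $p-1$ boxes below row~$1$ of $\xi$ must lie in the strip, giving $p-1\le h$ and hence $q\ge m$, and then box-counting forces the column shape — whereas you explicitly case out whether the strip meets row~$1$ (using connectedness and the no-$2\times2$ condition to force $\xi_2=j+1$ and a contradiction); both routes are fine and yours is slightly more careful at the stage the paper compresses. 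Your proof of~(ii) is also correct and a minor variant of the paper's: you first force $\xi$ to have exactly $m$ rows and then note the foot of the first column, which must be removed, lies in the target $(j,1^{m-1})$, while the paper argues directly that removing the foot of the first column drops the number of positive parts of $\xi$ to at most $m-1$, whereas $(j,1^{m-1})$ has $m$. One small point: your final line records the conclusion as $\{\xi;\nu\}=-\{j;i\}$, which is the intended content (and matches the use in the proof of Proposition~\ref{prop:table_4}); the literal expression $\{\xi-h;\nu-h\}=-\{j;i\}$ appearing in the lemma statement is a slip, as $\{\xi-h;\nu-h\}$ is by hypothesis equal to $\{j;i\}$.
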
 

\begin{proof} (i) If $\xi-h=(j,0^{m-1})$, then all boxes of the Young diagram of $\xi$ below the first row must belong to the boundary $h$-strip that we remove to get the Young diagram of 
$(j,0^{m-1})$. The inequality 
\[p-1\le h=p+q-m-1\] 
follows, implying that $q\ge m$. Thus we have $q=m$ and $p-1=h$. 
Similarly, $p=m$ and $h=m-1$, so the boundary $h$-strip removed from the Young diagram of $\xi$ (respectively $\nu$) consists of the boxes in the first column beginning from the second row. Moreover, if $\xi=(j,1^{m-1})$ and $\nu=(i,1^{m-1})$, then applying  
\eqref{eq:modification}, we have $x=y=1$ in the formula, hence the modification rule 
\eqref{eq:modification} says $\{\xi-h;\nu-h\}=-\{ j; i\}$.

(ii) Suppose $\xi-h=(j,1^{m-1})$ for some $\xi\in \Pi_m$. 
Unless $h=0$, to get the Young diagram of $\xi-h$ we must remove the bottom box in the first column of the Young diagram of $\xi$. Thus $\xi-h$ has at most $m-1$ non-zero parts, 
whereas $(j,1^{m-1})$ has $m$ non-zero parts. 
\end{proof} 


\section{The space of valuations as a $\U(m)$-module} 

Throughout this section we assume $m\ge 2$. 
Introduce the notation 
\[\langle \lambda\rangle:= [\lambda]\downarrow^{\OO(2m)}_{\SO(2m)}.\]

For $\lambda\in \Pi_m$ with $\lambda_m=0$ the restriction 
$[\lambda]\downarrow^{\OO(2m)}_{\SO(2m)}$ is 
an irreducible $\SO(2m)$-module  with highest weight $\lambda$, 
whereas  for $\mu\in\Pi_m$ with ${\mu_m>0}$ the restriction 
$[\mu]\downarrow^{\OO(2m)}_{\SO(2m)}$ is the direct sum of two non-isomorphic irreducible $\SO(2m)$-modules  with highest weight $\mu$ and $(\mu_1,\dots,\mu_{m-1},-\mu_m)$ 
(cf. \cite[p. 422, Theorem]{procesi}). 
Furthermore, for $k\in \{0,1,\dots,m\}$ set 
\[\ell(k):=\min\{k,2m-k\}.\]

\begin{theorem}\label{thm:alesker-bernig-schuster} 
(Alesker, Bernig, Schuster \cite[Theorem 1]{alesker-bernig-schuster}) 
The $\SO(2m)$-module $\Val_{k,\C}$ admits the  decomposition 
\[\overline{\mathrm{Val}_{k,\C}}=\sum_{(g,2^h)\in \Pi_{\ell(k)}, \ g\neq 1}
\langle (g,2^h)\rangle\]
where $(g,2^h)$ stands for the partition $(g,2,\stackrel{h)}{\ldots},2)$, and where  
the notation introduced in Section~\ref{sec:modification} is extended in the obvious way to locally finite dimensional $\SO(2m)$-representations in which each irreducible $\SO(2m)$-module has finite multiplicity. 
\end{theorem}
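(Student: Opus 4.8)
The statement we must establish is the decomposition of the $\SO(2m)$-module $\mathrm{Val}_{k,\C}$ into irreducible summands. My plan is to reduce this to known results on the structure of the space of translation invariant valuations, following the cited work of Alesker, Bernig, and Schuster. First, I would recall Alesker's Irreducibility Theorem together with the description of $\mathrm{Val}_k$ under the action of $\GL(2m,\R)$: the space $\mathrm{Val}_{k,\C}$ decomposes as an $\SO(2m)$-module, and by the Alesker--Bernig--Schuster analysis (via the Klain/Schneider embedding and Alesker--Fourier duality, or alternatively via the description of $\mathrm{Val}_k^{\mathrm{sm}}$ in terms of differential forms on the sphere bundle), the highest weights that occur are exactly those partitions $\lambda\in\Pi_m$ with $\lambda_1\le 2$ is \emph{not} the right bound — rather, one uses that the relevant $\SO(2m)$-types are those indexed by Young diagrams $(g,2^h)$ lying inside a box of width controlled by $2$ in all rows but the first, with the row constraint governed by $\ell(k)=\min\{k,2m-k\}$, and with $g\neq 1$ excluded. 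The content of the theorem is precisely this list, so the proof is essentially a matter of transcribing the classification in \cite{alesker-bernig-schuster} into the partition notation set up in Section~\ref{sec:modification}.

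Concretely, the steps I would carry out are: (1) State that by \cite[Theorem 1]{alesker-bernig-schuster} the $\SO(2m)$-module $\mathrm{Val}_k$ (equivalently its smooth part, which has the same $\SO(2m)$-type decomposition since $\SO(2m)$ is compact and the smooth vectors are dense) decomposes with each isotypic component of multiplicity at most one, and the highest weights occurring are the partitions of the stated shape. (2) Translate the highest-weight data of \cite{alesker-bernig-schuster} — which in the source is phrased in terms of $\SO(2m)$ highest weights $(\lambda_1,\dots,\lambda_m)$ — into the bracket notation $\langle\lambda\rangle=[\lambda]\downarrow^{\OO(2m)}_{\SO(2m)}$, being careful about the two cases $\lambda_m=0$ and $\lambda_m>0$; here, since the partitions $(g,2^h)$ appearing all have $h\le \ell(k)-1\le m-1$ rows and the last row is $0$ whenever $h<m$, essentially all terms are of the $\lambda_m=0$ type, so $\langle(g,2^h)\rangle$ is a single irreducible $\SO(2m)$-module. (3) Verify the index range: the condition $(g,2^h)\in\Pi_{\ell(k)}$ encodes both $g\ge 2$ (so that it is a genuine partition with $\lambda_1\ge\lambda_2$) or $g\in\{0,2\}$ together with the total number of nonzero rows being at most $\ell(k)$, and the exclusion $g\neq 1$ removes the one spurious shape; I would cross-check this against the known dimension count, namely that $\dim\mathrm{Val}_k^{\SO(2m)}$-isotypic data matches Alesker's $1+\lfloor\ell/2\rfloor$ for the $\U(m)$-invariants after restriction, which serves as a sanity check that no summand is missing or extra.

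The main obstacle I anticipate is \emph{not} a hard argument but a bookkeeping subtlety: matching conventions between the $\OO(2m)$/$\SO(2m)$ highest-weight parametrization used in \cite{alesker-bernig-schuster} and the partition-and-modification-rule formalism introduced in Section~\ref{sec:modification}, in particular getting the $g\neq 1$ exclusion and the role of $\ell(k)$ versus $k$ itself exactly right (the hard Lefschetz / Poincaré duality symmetry $\mathrm{Val}_k\cong\mathrm{Val}_{2m-k}$ as $\SO(2m)$-modules is what forces the appearance of $\ell(k)=\min\{k,2m-k\}$, and I would invoke it explicitly to reduce to the range $k\le m$). Once the notational dictionary is fixed, the proof is immediate: it is a restatement of \cite[Theorem 1]{alesker-bernig-schuster} in the language of this paper, and I would present it as such, with the verification steps above occupying at most a short paragraph.
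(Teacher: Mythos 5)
The paper does not prove this theorem: it is quoted directly from Alesker, Bernig, and Schuster \cite{alesker-bernig-schuster}, with the notation $\langle\lambda\rangle=[\lambda]\downarrow^{\OO(2m)}_{\SO(2m)}$ serving as the bridge between the $\SO(2m)$ highest-weight labelling used there and the $\OO(2m)$-partition formalism of Section~\ref{sec:modification}. Your proposal correctly identifies the task as a citation-plus-translation and outlines the right dictionary, so in that sense you and the paper take the same (null) approach. Two caveats in your sketch are worth flagging. First, the remark that \emph{essentially all} summands have $\lambda_m=0$ is wrong precisely in the case that matters later: when $\ell(k)=m$ and $h=m-1$, the partition $(g,2^{m-1})$ has $m$ nonzero parts, so $\langle(g,2^{m-1})\rangle$ is the direct sum of two nonisomorphic irreducibles with highest weights $(g,2,\dots,2,\pm 2)$, and this is what drives the separate $k=m$ rows in Proposition~\ref{prop:table_4} and Theorem~\ref{prop:table_5}; glossing it over would derail the downstream multiplicity counts. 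Second, the suggested sanity check against $\dim_{\C}\Val_{k,\C}^{\U(m)}=1+\lfloor\ell/2\rfloor$ would be circular inside this paper's own logic, since that dimension is rederived from the very decomposition you would be checking; it is an independent theorem of Alesker, so the cross-check is legitimate against the literature, but it cannot replace verifying the notational dictionary line by line.
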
  

Note that for any $\lambda\in\Pi_m$ we have 
\[\langle\lambda\rangle \downarrow^{\SO(2m)}_{\U(m)}=[\lambda] \downarrow^{\OO(2m)}_{\U(m)},\] 
whence 
\[\overline{\mathrm{Val}_{k,\C}}\downarrow^{\OO(2m)}_{\U(m)}=
\sum_{(g,2^h)\in \Pi_{\ell(k)},\ g\neq 1}
[(g,2^h)]\downarrow^{\OO(2m)}_{\U(m)} \ \in R_{\U(m)}.\] 

\begin{proposition}\label{prop:table_3} 
Table~1 below gives all pairs $([\lambda],\{j;i\})$ where 
$\lambda\in \Pi_m$ is of the form $\lambda=(g,2^h)$, $g\neq 1$, $j\le i$, such that 
$\{ j;i\}$  has 
non-zero coefficient (given in the table) in the expansion \eqref{eq:2-king-branching} of 
$[\lambda]\downarrow^{\OO(2m)}_{\U(m)}$: 
\begin{equation*}\label{eq:table_3} \scriptsize
\begin{array}{cc||c|c|c|c|c|c}
 &  &\{ 0;e\}& \{ 0; e\} &\{ 0;e\} & \{ 1;e-1\}  & \{ 1;e-1\} & \{ j;e-j\}  
  \\ 
 &\hspace{-2cm}\mbox{\em \sc Table 1} &e=0 & e= 1 &e\ge 2 & e=2 & e\ge 3 &  e-j\ge  j\ge 2 
 \\ \hline  \hline
[e] & e\ne 1  &1 &-  & 1 & 1 & 1 & 1   \\      
\lbrack (e,2^h) \rbrack & 2\mid h>0
& - & - &   1 & 1 & 2 & 3  \\
\lbrack (e,2^h) \rbrack & 2\nmid h
& - & - & 0 & 1 & 1 & 1  \\
\lbrack (e+2,2^h) \rbrack & 2\nmid h  & 1 &1 & 1  & 1 & 1 & 1   \\
\lbrack (e-2,2^h) \rbrack & 2\nmid h
& - &- &  0  & - & 0 & 1  
\end{array}
\end{equation*} 
\end{proposition}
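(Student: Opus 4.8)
The plan is to compute, for each partition $\lambda = (g,2^h)$ with $g\neq 1$ appearing in the $\SO(2m)$-decomposition of $\Val_{k,\C}$, the multiplicity of a fixed $\U(m)$-module $\{j;i\}$ (with $j \le i$) in $[\lambda]\downarrow^{\OO(2m)}_{\U(m)}$, using the expansion \eqref{eq:2-king-branching}. The sum there runs over $\nu,\mu,\xi,(2\beta)'\in\Pi_m$ with weight $c^{\mu}_{\xi,\nu}c^{\lambda}_{\mu,(2\beta)'}$; we want the coefficient of $\{\xi;\nu\}$ with $\{\xi;\nu\}$ equal to $\{j;i\}$ in $R_{\U(m)}$. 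The key simplification is that $\{j;i\}$ is a ``two-hook-free'' module: its labels are single-row partitions $(i,0^{m-1})$ and $(j,0^{m-1})$. So I first enumerate which $\xi,\nu\in\Pi_p^+,\Pi_q^+$ with $p+q\le m$ give $\{\xi;\nu\}=\{j;i\}$ literally (only $\xi=(j), \nu=(i)$, when $i,j\ge 0$), and then — this is where Lemma~\ref{lemma:hookremoval} enters — which $\xi,\nu$ with $p+q>m$ reduce via the modification rule \eqref{eq:modification} to $\pm\{j;i\}$. By Lemma~\ref{lemma:hookremoval}(i) the only such ``virtual'' contribution comes from $\xi=(j,1^{m-1})$, $\nu=(i,1^{m-1})$ with $i,j>0$, and it enters with sign $-1$; by Lemma~\ref{lemma:hookremoval}(ii) no further iteration of the rule produces $(j,1^{m-1})$, so the bookkeeping terminates after one step.

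Thus the multiplicity of $\{j;i\}$ in $[\lambda]\downarrow^{\OO(2m)}_{\U(m)}$ equals
$$
\sum_{\mu\in\Pi_m, (2\beta)'} c^{\mu}_{(j),(i)}\, c^{\lambda}_{\mu,(2\beta)'}
\;-\;\sum_{\mu\in\Pi_m, (2\beta)'} c^{\mu}_{(j,1^{m-1}),(i,1^{m-1})}\, c^{\lambda}_{\mu,(2\beta)'},
$$
where the second sum is present only when $i,j>0$ and $m\ge 2$. Next I would identify the partitions $\mu$ occurring. For the first sum, $c^{\mu}_{(j),(i)}\neq 0$ forces, by Pieri's rule, $\mu$ to be obtained from the single row $(i)$ by adding $j$ boxes no two in the same column; so $\mu$ has at most two rows, $\mu=(i+a, j-a)$ for $0\le a\le\min(j,\,?)$, each with coefficient $1$ (distinct $\mu$'s). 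For the second sum, $c^{\mu}_{(j,1^{m-1}),(i,1^{m-1})}\neq 0$ forces $\mu$ to be a fixed list of partitions with an explicit first two columns; in fact one checks $\mu$ must have $\mu' $ (conjugate) very constrained, and for the shapes $\lambda=(g,2^h)$ in question only $\mu=(i+j, 2^{m-1})$-type shapes survive. Having pinned down the relevant $\mu$'s, the problem reduces to evaluating $c^{(g,2^h)}_{\mu,(2\beta)'}$ — the Littlewood–Richardson coefficient of the two-column-plus-one-row shape $(g,2^h)$ as a product of $\mu$ with an even-column partition $(2\beta)'$. These are small, explicit LR computations: $(2\beta)'$ has all columns of even length, so it is built from $2\times?$ vertical dominoes, and $\lambda=(g,2^h)$ has only a long first row and then a $2\times h$ block; the count of LR fillings in each of the (finitely many) cases $e=0,1,\ge 2$ and row/column parity of $h$ is a bounded enumeration. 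Carrying these out case by case, and subtracting the Lemma~\ref{lemma:hookremoval} correction where $i,j>0$, produces exactly the five rows of Table~1.

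I expect the main obstacle to be organizing the Littlewood–Richardson bookkeeping so that the many near-identical subcases collapse cleanly into the five table rows — in particular, tracking for which $\mu=(i+a,j-a)$ the coefficient $c^{(g,2^h)}_{\mu,(2\beta)'}$ is nonzero as $h$ ranges over even/odd values and as $e=i+j$ (equivalently $e-j\ge j$) crosses the thresholds $0,1,2,3$, since whether $\mu$ fits inside $(g,2^h)$ depends delicately on these parities; and separately, verifying that the $-\{j;i\}$ correction term from Lemma~\ref{lemma:hookremoval} only affects the bottom row $[(e-2,2^h)]$, $2\nmid h$ (and there reduces an a priori count to the listed value), while being absorbed harmlessly elsewhere. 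A useful sanity check at the end is that summing the table entries against the $\SO(2m)$-multiplicities from Theorem~\ref{thm:alesker-bernig-schuster} must reproduce Alesker's dimension $1+\lfloor\ell/2\rfloor$ in the $d=0$ specialization, which constrains the arithmetic.
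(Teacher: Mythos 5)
Your proposal conflates the two-step structure of the paper. Proposition~\ref{prop:table_3} only asserts the coefficient of the symbol $\{j;i\}=\{(j);(i)\}$ appearing \emph{literally} in the non-negative expansion \eqref{eq:2-king-branching}, i.e.\ the number $\sum_{\mu,(2\beta)'} c^{\mu}_{(j),(i)}\,c^{(g,2^h)}_{\mu,(2\beta)'}$, and makes no reference to King's modification rule or to Lemma~\ref{lemma:hookremoval}. Converting those coefficients into true multiplicities of irreducible $\U(m)$-modules --- which requires rewriting the ``virtual'' terms $\{\xi;\nu\}$ with $p+q>m$ in the basis $\tau_{\U(m)}$ via Lemma~\ref{lemma:hookremoval} --- is the content of the Remark that follows and of Proposition~\ref{prop:table_4}. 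By building the $-\{j;i\}$ correction into your computation you are in fact proving (a version of) Proposition~\ref{prop:table_4}, not Proposition~\ref{prop:table_3}; and the claim that the corrected numbers reproduce Table~1 cannot be right, since Table~2 in Proposition~\ref{prop:table_4} shows the corrected multiplicities genuinely differ from Table~1 in the rows with $h=m-1$.

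Even read as an argument for the multiplicity version, your localization of where the correction falls is wrong. The term $\{(j,1^{m-1});(e-j,1^{m-1})\}$, which collapses to $-\{j;e-j\}$ under the modification rule, has nonzero coefficient $\sum_{\mu,(2\beta)'}c^{\mu}_{(j,1^{m-1}),(e-j,1^{m-1})}c^{(g,2^h)}_{\mu,(2\beta)'}$ only when some $\mu$ satisfies $\mu\supseteq (e-j,1^{m-1})$ and $\mu\subseteq(g,2^h)$, which forces $h=m-1$ and then $g=e$. So the correction hits the rows $\lbrack(e,2^{h})\rbrack$ with $h=m-1$ (even or odd $h$ according to the parity of $m$), \emph{not} the bottom row $\lbrack(e-2,2^h)\rbrack$, $2\nmid h$, where no admissible $\mu$ fits inside $(e-2,2^h)$. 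To repair a proof of Proposition~\ref{prop:table_3} itself, delete the correction step entirely: what remains --- using the support constraints of the LR rule and Pieri to restrict to $\mu\in\{(i+j),(i+j-1,1),(i+j-2,2)\}$ (because $\mu\subseteq(g,2^h)$ forces $\mu_2\leq 2$), identifying the admissible $(2\beta)'\in\{(2^{h\pm 1}),(2^h),(2^{h-1},1,1)\}$, and enumerating LR tableaux case by case --- is exactly the paper's argument, and your proposed sanity check against Alesker's $d=0$ dimension is a reasonable closing consistency test.
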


\begin{proof} 
Recall that by the  Littlewood-Richardson Rule (see for example \cite[p. 498, Theorem]{procesi} or \cite[p.456, A.8]{fulton-harris}),  
$c^{\eta}_{\alpha,\delta}\neq 0$ for $\alpha\in \Pi_p$, $\delta\in \Pi_q$ implies that $\alpha_s\le\eta_s$ and $\delta_s\le\eta_s$ for each $s=1,\dots,m$, 
$\sum_{s=1}^m\eta_s=\sum_{s=1}^m(\alpha_s+\delta_s)$, 
and $\eta\in\Pi_{p+q}$. Therefore if 
$c_{(i,0^{m-1}),(j,0^{m-1})}^{\mu}c^{(g,2^h)}_{\mu,(2\beta)'}$ is non-zero, then 
$\mu\in \{(i+j),(i+j-1,1),(i+j-2,2)\}$, $(2\beta)'=(2^t)$ with $t\in\{h+1,h,h-1\}$ or 
$(2\beta)'=(2^{h-1},1,1)$, and $i+j\in\{g-2,g,g+2\}$. 

Next we indicate the calculation of the value 
$\dim_{\C}(\mathrm{Hom}_{\U(m)}(\{ j, e-j\},[(e,2^h)]))=3$ for $e-j\ge j\ge 2$, $2\mid h>0$. 
By Pieri's rule  (see e.g. \cite[p.455, A.7]{fulton-harris}) we have 
$c^{\mu}_{(e-j),(j)}=1$  for $\mu\in\{(e),\ (e-1,1),\ (e-2,2)\}$. 
It follows  easily from the Littlewood-Richardson rule  that for 
$\mu\in\{(e),\ (e-1,1),\ (e-2,2)\}$, we have 
$c^{(e,2^h)}_{\mu,(2\delta)'}\neq 0$ holds only if $(2\delta)'=(2^h)$, and in this case   $c^{(e,2^h)}_{\mu,(2\delta)'}=1$. We draw the corresponding Littlewood-Richardson tableau: 

\medskip
\begin{ytableau}
\star&\star&$1$&$1$&$1$ \\
\star &\star  \\ 
$1$ & $1$
\end{ytableau}
$\qquad$
\begin{ytableau}
\star&\star&$1$&$1$&$1$\\
\star &\star  \\ 
$1$ & $2$
\end{ytableau}
$\qquad$ 
\begin{ytableau}
\star&\star&$1$&$1$&$1$ \\
\star &\star  \\ 
$2$ & $2$
\end{ytableau}

\medskip

Or by similar considerations, the values 
$\dim_{\C}(\mathrm{Hom}_{\U(m)}(\{ 1; e-1\},[([e+2,2^h)]))=1$ and 
$\dim_{\C}(\mathrm{Hom}_{\U(m)}(\{ 1;e-1\},[(e,2^h)]))=1$ for $2\nmid h$ correspond to the tableaux

\medskip 

\begin{ytableau} 
\star &\star& $1$ &$1$ &$1$  \\
\star &\star \\
\star & \star \\
\star & \star 
\end{ytableau} 
$\qquad$ 
\begin{ytableau}\star &\star& $1$ \\
\star &\star \\
\star & $1$ \\
\star & $2$ 
\end{ytableau} 

\smallskip The remaining entries in Table~1 are calculated similarly.  
\end{proof}

\begin{remark} 
The number in Table~1 in the row labeled by $[\lambda]$ and the column labeled by $\{j;i\}$  
is the coefficient of $\{j;i\}$ in the expansion \eqref{eq:2-king-branching} of $[\lambda]\downarrow^{\OO(2m)}_{\U(m)}$. Note that for certain  $[\lambda]$ and $\{j;i\}$ this number is not equal to the \emph{multiplicity} of the irreducible 
$\U(m)$-module $\{j;i\}$ as a summand in $[\lambda]\downarrow^{\OO(2m)}_{\U(m)}$. 
Indeed, the expansion \eqref{eq:2-king-branching} of $[\lambda]\downarrow^{\OO(2m)}_{\U(m)}$ may contain terms $\{\xi;\nu\}$ where $\{\xi;\nu\}$ does not belong to 
$\tau_{\U(m)}$ (namely when $\xi\in \Pi_p^+$, $\nu\in \Pi_q^+$ with $p+q>m$). 
When such an element $\{\xi;\nu\}$ is expanded in terms of the basis $\tau_{\U(m)}$ of  $R_{U(m)}$ using King's modification rules \eqref{eq:modification}, basis elements of the form $\{j;i\}$ may appear with non-zero integer coefficient.  
The necessary modifications in Table~1 in order to get the \emph{multiplicity} of 
$\{j;i\}$ as a summand in $[\lambda]\downarrow^{\OO(2m)}_{\U(m)}$ are taken into account in Proposition~\ref{prop:table_4} below. 
\end{remark} 

\goodbreak
\begin{proposition}\label{prop:table_4} \mbox{}
\begin{itemize}\item[(i)] 
For $h<m-1$, Table~1 gives the  
non-zero   multiplicities of summands of the form $\{j;i\}$  with $j\le i$ in the  
$\U(m)$-modules  
$[(g,2^h)]\downarrow^{\OO(2m)}_{\U(m)}$.   
\item[(ii)] 
The non-zero multiplicities of the summands of the form $\{j;i\}$  with $j\le i$ in the  
$\U(m)$-modules  $[(g,2^{m-1})]\downarrow^{\OO(2m)}_{\U(m)}$    
are given in Table~2 below: 
\begin{equation*}\label{eq:table_4} \scriptsize
\begin{array}{cc||c|c|c|c|c|c}
 &  &\{ 0;e\}& \{ 0; e\} &\{ 0;e\} & \{ 1;e-1\}  & \{ 1;e-1\} & \{ j;e-j\}  
  \\  
 & \hspace{-2.5cm}\mbox{\em \sc Table 2} &e=0 & e= 1 &e\ge 2 & e=2 & e\ge 3 &  e-j\ge  j\ge 2 
 \\ \hline  \hline

\lbrack (e,2^{m-1}) \rbrack & 2\nmid m
& - & - &   1 & 0 & 1 & 2  \\
\lbrack (e,2^{m-1}) \rbrack & 2\mid m
& - & - & 0 & 0 & 0 & 0  \\
\lbrack (e+2,2^{m-1}) \rbrack & 2\mid m  & 1 &1 & 1  & 1 & 1 & 1   \\
\lbrack (e-2,2^{m-1}) \rbrack & 2\mid m
& - &- &  0  & - & 0 & 1  
\end{array}
\end{equation*} 
\end{itemize}
\end{proposition}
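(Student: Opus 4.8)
The plan is to deduce Proposition~\ref{prop:table_4} from Proposition~\ref{prop:table_3} by analysing which terms $\{\xi;\nu\}$ in the King expansion \eqref{eq:2-king-branching} of $[(g,2^h)]\downarrow^{\OO(2m)}_{\U(m)}$ lie outside $\tau_{\U(m)}$ and need to be reduced via the modification rule \eqref{eq:modification}. For part (i), the key observation is that whenever $c^{(g,2^h)}_{\mu,(2\beta)'}c^{\mu}_{\xi,\nu}\ne 0$, the Littlewood--Richardson constraints recorded in the proof of Proposition~\ref{prop:table_3} force $\xi,\nu$ to have at most two nonzero parts with first part at most $g$ and total box count bounded by $|g,2^h|$; in particular $\xi\in\Pi_p^+$, $\nu\in\Pi_q^+$ with $p,q\le h+1\le m-1$, so $p+q\le 2m-2<m$... — more carefully, $p,q\le \min(2,\text{number of rows})$, and since a term needing modification requires $p+q>m$, while here the relevant $\xi,\nu$ have column lengths bounded by $h+2\le m+1$; I would check that the only way to get $p+q>m$ is $h=m-1$. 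So for $h<m-1$ no modification occurs and Table~1 already gives genuine multiplicities.

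For part (ii), with $h=m-1$, I would enumerate precisely the terms $\{\xi;\nu\}$ appearing in \eqref{eq:2-king-branching} with $p+q>m$. From the LR analysis, these come from $\mu\in\{(e-1,1),(e-2,2)\}$ combined with the splitting $\mu=\xi+\nu$ in the sense of LR coefficients, producing pairs like $\xi=(j,1^{m-1})$-type or $\nu=(i,1^{m-1})$-type partitions whose column lengths sum to more than $m$. Here Lemma~\ref{lemma:hookremoval} is exactly the tool: part~(i) tells us that a term $\{\xi;\nu\}$ with $\xi-h'=(j,0^{m-1})$, $\nu-h'=(i,0^{m-1})$ must have $\xi=(j,1^{m-1})$, $\nu=(i,1^{m-1})$ (forcing $i,j>0$), and that this term reduces to $-\{j;i\}$; part~(ii) guarantees no further (iterated) modification is possible, so the reduction terminates after one step. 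I would then carefully tabulate, for each target $\{j;i\}=\{j;e-j\}$, how many such "extra" terms $\{(j,1^{m-1});(e-j,1^{m-1})\}$ arise (with sign $-1$), and whether a sign $(-1)$ also comes from the parity of $m$ via the column-length parity condition $(2\beta)'$ in Theorem~\ref{thm:king_branching}.

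The bookkeeping I expect: for $[(e,2^{m-1})]$ the raw Table~1 entries ($h=m-1$, so $2\mid h \iff 2\nmid m$ and vice versa) get corrected by subtracting the count of $(j,1^{m-1})$-type contributions; I anticipate that in the $2\mid m$ row this correction cancels everything to $0$ (matching the table), while in the $2\nmid m$ row it drops the multiplicity of $\{j;e-j\}$ from $3$ to $2$ and of $\{1;e-1\}$ from $1$ to... wait, Table~2 shows $\{1;e-1\}\mapsto 1$ for $2\nmid m$ at $e\ge 3$ and $0$ at $e=2$; I would verify the $e=2$ anomaly comes from $(e-2,2)=(0,2)$ not being a partition so the $\mu=(e-2,2)$ branch is absent. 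For $[(e+2,2^{m-1})]$ and $[(e-2,2^{m-1})]$ (both only in the $2\mid m$ regime since for those rows $2\nmid h$ means $2\mid m$... checking: those Table~1 rows have $2\nmid h=m-1$, i.e. $2\mid m$), I expect the modification either does not trigger or cancels against itself, leaving Table~1 unchanged, which matches Table~2. The main obstacle is the careful sign tracking in iterated applications of \eqref{eq:modification} and making sure Lemma~\ref{lemma:hookremoval}(ii) genuinely closes off the recursion; this is where I would spend the bulk of the argument, organising it as a case check over the finitely many $\mu$ and $(2\beta)'$ permitted by the LR constraints.
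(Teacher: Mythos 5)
Your overall strategy is the same as the paper's: start from the King-branching coefficients recorded in Table~1, then use the modification rule \eqref{eq:modification} together with Lemma~\ref{lemma:hookremoval} to account for terms $\{\xi;\nu\}\notin\tau_{\U(m)}$. This is the right idea. However, your justification of part~(i) rests on an incorrect claim, and the correct argument is structured differently from what you sketch.

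You try to show that when $h<m-1$ ``no modification occurs,'' first via the inequality $p+q\le 2m-2<m$, which is plainly false for $m\ge 2$, and then via the fallback guess that $p+q>m$ forces $h=m-1$. The fallback is also false. For example with $m=4$ and $\lambda=(2,2,2)$ (so $h=2<m-1$), the term $\{(1,1,1);(1,1,1)\}$ occurs with coefficient $1$ in the King expansion \eqref{eq:2-king-branching}, and here $p+q=6>m$; a modification step reduces it to $-\{(1,1);(1,1)\}$. So modifications genuinely do happen for $h<m-1$. The paper's argument does not claim they do not; it instead uses Lemma~\ref{lemma:hookremoval}(i) to show that, among all $\{\xi;\nu\}$, the \emph{only} one whose reduction to the basis $\tau_{\U(m)}$ produces a basis element of the single-row form $\{j;i\}$ is $\{(j,1^{m-1});(i,1^{m-1})\}$ (with coefficient $-1$), and Lemma~\ref{lemma:hookremoval}(ii) to show that this particular term cannot itself be produced by an intermediate modification step. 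Since $(j,1^{m-1})$ has $m$ rows, the containments $\xi\subseteq\mu\subseteq\lambda=(g,2^h)$ imposed by the Littlewood--Richardson rule force $h+1\ge m$, i.e.\ $h=m-1$; moreover one checks the only contributing $\mu$ is $\lambda$ itself with $(2\beta)'=(0)$, so the King coefficient of $\{(j,1^{m-1});(e-j,1^{m-1})\}$ in $[(e,2^{m-1})]$ is exactly $1$ and it is absent from $[(g,2^h)]$ for $h<m-1$ or $g\ne e$. That is why Table~1 survives unchanged in part~(i): not because no modification is needed, but because no modification step ever lands on a $\{j;i\}$.

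For part~(ii) your expected bookkeeping (subtract $1$ from the $j\geq 1$ entries of the appropriate Table~1 row, with the $2\mid h$ versus $2\nmid h$ cases of Table~1 corresponding to $2\nmid m$ versus $2\mid m$) is indeed what happens and does match Table~2, so the plan is sound; but you leave it as a sketch. Also, the worry about an extra sign ``coming from the parity of $m$ via the column-length parity condition $(2\beta)'$'' is unfounded: the only sign in play is $(-1)^{x+y-1}=-1$ from the modification rule, because Lemma~\ref{lemma:hookremoval}(i) forces $x=y=1$; the constraint that $(2\beta)'$ have even columns restricts which partitions are summed over but contributes no sign. You should drop that speculation and simply cite Lemma~\ref{lemma:hookremoval}(i) for the $-1$; and you should lean harder on Lemma~\ref{lemma:hookremoval}(ii) to conclude at once that no iteration is needed, rather than promising ``to spend the bulk of the argument'' on it.
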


\begin{proof} 
 To get the multiplicity of $\{ j;i\}$ in $[(g,2^h)]\downarrow^{\OO(2m)}_{\U(m)}$ 
 we may start with the expansion \eqref{eq:2-king-branching}, which expresses 
$[(g,2^h)]\downarrow^{\OO(2m)}_{\U(m)}$ as a non-negative integer linear combination 
of symbols $\{\xi;\nu\}$, where $\xi,\nu\in \Pi_m$. The terms $\{\xi;\nu\}$ not belonging to $\tau_{\U(m)}$ need to be rewritten in terms of the basis $\tau_{U(m)}$ by a possibly iterated use of King's modification rules \eqref{eq:modification}. Then we can collect the coefficient of 
$\{j;i\}$ in $[(g,2^h)]\downarrow^{\OO(2m)}_{\U(m)}$ with respect to the basis $\tau_{\U(m)}$.  
By \eqref{eq:modification} and Lemma~\ref{lemma:hookremoval} (i), (ii), apart from $\{j;i\}$ the only $\{\xi;\nu\}$ whose expansion with 
respect to the basis $\tau_{\U(m)}$ involves a basis element of the form $\{j;i\}$ is 
$\{(j,1^{m-1});(i,1^{m-1})\}$, and the coefficient of $\{j;i\}$ in $\{(j,1^{m-1});(i,1^{m-1})\}$ with respect to the basis $\tau_{\U(m)}$ is $-1$. 
 Now $\{(j,1^{m-1});(e-j,1^{m-1})\}$ occurs with   coefficient $1$ in the expansion \eqref{eq:2-king-branching} of $[(e,2^{m-1})]\downarrow^{\OO(2m)}_{\U(m)}$, and does not occur in $[(g,2^h)]\downarrow^{\OO(2m)}_{\U(m)}$ for $h<m-1$ or if  $g\neq e$. 
Consequently, by Proposition~\ref{prop:table_3},  the non-zero   multiplicities of $\{ j;e-j\}$  (with $j\le e-j$) in the  
$\U(m)$-modules  
$[(g,2^h)]\downarrow^{\OO(2m)}_{\U(m)}$  
are given by Table~1 for $h<m-1$ or $g\neq e$, whereas for $h=m-1$,  
 $g=e$ and $j\ge 1$ we need to subtract $1$ from the corresponding entry of Table~1, and that is how we obtained   
Table~2.  
\end{proof}

Now we are in position to give the multiplicities of the irreducible $\U(m)$-modules $\{j;i\}$ as a summand of $\Val_{k,\C}$. 

\begin{theorem}\label{prop:table_5} 
The table  below gives the non-zero multiplicities of summands of the form  
$\{ j;i\}$ with 
$i\ge j$ in $\mathrm{Val}_{k,\C}\downarrow^{\OO(2m)}_{\U(m)}$ for $0\le k\le 2m$: 
\begin{equation*} \scriptsize
\begin{array}{c||c|c|c|c|c|c}
   \mathrm{Val}_{k,\C} & \{ 0,0\} & \{ 0;1\} & \{ 0; e\}  & \{ 1;1\} & \{ 1; e-1\} & \{ j; e-j\} \\ 
 & & & e\ge 2 &  & e\ge 3 &  e-j\ge  j\ge 2 
 \\ 
 \hline \hline 
 k\in\{0,2m\} & 1 & 0 & 0& 0 & 0 & 0 \\
  1\le \ell(k)<m &1+ \lfloor \frac{\ell(k)}{2} \rfloor & \lfloor \frac{\ell(k)}{2} \rfloor & \ell(k) & \ell(k)+\lfloor\frac{\ell(k)}2 \rfloor & 2\ell(k)-1 & 3\ell(k)-2
\\ 
k=m  &  1+\lfloor \frac m2 \rfloor &  \lfloor \frac m2 \rfloor & m-1 
& m+\lfloor\frac m2 \rfloor -1& 2m-2 & 3m-3
\end{array}
\end{equation*}
\end{theorem}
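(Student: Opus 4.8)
The plan is to compute the multiplicities in $\mathrm{Val}_{k,\C}\downarrow^{\OO(2m)}_{\U(m)}$ by combining Theorem~\ref{thm:alesker-bernig-schuster} with Proposition~\ref{prop:table_4}, summing the contributions of each partition $(g,2^h)$ with $g\ne 1$ and $(g,2^h)\in\Pi_{\ell(k)}$. First I would observe that by Theorem~\ref{thm:alesker-bernig-schuster} and the compatibility identity $\langle\lambda\rangle\downarrow^{\SO(2m)}_{\U(m)}=[\lambda]\downarrow^{\OO(2m)}_{\U(m)}$, the multiplicity of $\{j;i\}$ in $\mathrm{Val}_{k,\C}\downarrow^{\OO(2m)}_{\U(m)}$ equals $\sum_{(g,2^h)\in\Pi_{\ell(k)},\,g\ne 1}(\text{mult.\ of }\{j;i\}\text{ in }[(g,2^h)]\downarrow^{\OO(2m)}_{\U(m)})$. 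So the whole computation reduces to enumerating, for a fixed target $\{j;e-j\}$ with $0\le j\le e-j$, exactly which partitions $(g,2^h)\in\Pi_{\ell(k)}$ with $g\ne 1$ contribute, reading off their contributions from Table~1 (Proposition~\ref{prop:table_4}(i)) when $h<m-1$, and from Table~2 (Proposition~\ref{prop:table_4}(ii)) when $h=m-1$.

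The key bookkeeping step is to identify, for each column $\{j;i\}$ of the target table, the relevant values of $g$. From Table~1 the partition $(g,2^h)$ contributes to $\{j;e-j\}$ only when $g\in\{e-2,e,e+2\}$, with the further constraint $g\ne 1$ and the parity constraints on $h$ recorded in the tables. Thus I would fix $e$ and $j$, and for each admissible $g\in\{e,e+2,e-2\}$ count the number of $h\ge 0$ with $(g,2^h)\in\Pi_{\ell(k)}$ of the appropriate parity — note $(g,2^h)\in\Pi_{\ell(k)}$ means $g+h\le \ell(k)$ (equivalently the Young diagram has at most $\ell(k)$ rows), hence $h\le \ell(k)-g$. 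Summing the table entries weighted by these counts of $h$ gives a clean closed form: e.g.\ for $\{j;e-j\}$ with $e-j\ge j\ge 2$ and $\ell(k)<m$ one gets contributions $1$ from each $[e]$-type row, $3$ from each $[(e,2^h)]$ with $h$ even positive, $1$ from each $[(e,2^h)]$ with $h$ odd, $1$ from each $[(e+2,2^h)]$ with $h$ odd, and $1$ from each $[(e-2,2^h)]$ with $h$ odd, and after counting the available $h$'s (which for $\ell(k)<m$ all satisfy $h<m-1$, so Table~1 applies throughout) the alternating contributions telescope to $3\ell(k)-2$, independent of $e$ and $j$. The columns $\{0;0\}$, $\{0;1\}$, $\{0;e\}$, $\{1;1\}$, $\{1;e-1\}$ are handled the same way, using the dash/zero entries of Table~1 to restrict which $g$ actually contribute; the extreme case $k\in\{0,2m\}$ (where $\ell(k)=0$, so only the empty/constant term survives) is immediate.

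The main obstacle — and the only place real care is needed — is the boundary case $k=m$, i.e.\ $\ell(k)=m$. Here the largest admissible $h$ satisfies $g+h\le m$, so for $g=e$ the value $h=m-e$ can equal $m-1$ (when $e=1$, excluded) or, more relevantly, the partitions $(g,2^{m-1})$ enter the range and one must switch from Table~1 to Table~2. Concretely, for the column $\{j;e-j\}$ the partition $(e,2^{m-1})$ is in $\Pi_m$ only if $e\le 1$, so it never contributes for $e\ge 2$; but $(e+2,2^{m-1})$ requires $e+2+(m-1)\le m$, impossible, and $(e-2,2^{m-1})$ requires $e-2+(m-1)\le m$, i.e.\ $e\le 3$ — so precisely for small $e$ one partition's contribution must be taken from Table~2 rather than Table~1, and the parity of $m$ now matters. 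I would organize this as: run the generic count as if all $h$ ran up to $\ell(k)-g$ using Table~1, then apply a correction term for each partition of the form $(g,2^{m-1})$ that actually lies in $\Pi_m$, replacing its Table~1 entry by its Table~2 entry (the difference being $\pm 1$ or the loss of the whole term, per Proposition~\ref{prop:table_4}). Carrying out this correction and re-summing yields $3m-3$ for the last column, $2m-2$ for $\{1;e-1\}$, $m+\lfloor\frac m2\rfloor-1$ for $\{1;1\}$, $m-1$ for $\{0;e\}$, $\lfloor\frac m2\rfloor$ for $\{0;1\}$, and $1+\lfloor\frac m2\rfloor$ for $\{0;0\}$, matching the stated table; the floor functions $\lfloor\ell(k)/2\rfloor$ appearing in the $1\le\ell(k)<m$ row arise from counting how many $h$ of a fixed parity lie in a given range, which is where I expect the bulk of the (routine) arithmetic to go.
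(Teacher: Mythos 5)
Your overall strategy is exactly the paper's: combine Theorem~\ref{thm:alesker-bernig-schuster} with Proposition~\ref{prop:table_4}, sum the contributions of each $(g,2^h)\in\Pi_{\ell(k)}$, $g\neq 1$, reading the per-partition multiplicities from Table~1 or Table~2. However, there is a genuine error in your bookkeeping. You state that $(g,2^h)\in\Pi_{\ell(k)}$ ``means $g+h\le\ell(k)$ (equivalently the Young diagram has at most $\ell(k)$ rows), hence $h\le\ell(k)-g$.'' That equivalence is false: $\Pi_{\ell(k)}$ is the set of partitions with at most $\ell(k)$ nonzero parts, and the partition $(g,2^h)$ with $g\geq 2$ has $1+h$ parts, hence $(g,2^h)\in\Pi_{\ell(k)}$ if and only if $1+h\le\ell(k)$, i.e.\ $h\le\ell(k)-1$, \emph{independently of} $g$. (The quantity $g$ is the length of the first row, not the number of rows.) You can see the correct bound in the paper's one displayed computation, which sums over $\#\{h : h\le\ell(k)-1,\ 2\nmid h\}$ etc.\ with no $g$-dependence.

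This error is not cosmetic: it propagates into the part of the proof you yourself call ``the only place real care is needed,'' the boundary case $\ell(k)=m$. From $g+h\le m$ you conclude that $(e,2^{m-1})\notin\Pi_m$ for $e\ge 2$, that $(e+2,2^{m-1})$ never lies in $\Pi_m$, and that $(e-2,2^{m-1})$ lies in $\Pi_m$ only for $e\le 3$. All three statements are wrong: each of $(e,2^{m-1})$, $(e+2,2^{m-1})$, $(e-2,2^{m-1})$ has exactly $m$ nonzero parts and therefore lies in $\Pi_m$ whenever the leading part is at least $2$. Hence for $\ell(k)=m$ the correction from Table~1 to Table~2 applies to \emph{all} partitions with $h=m-1$ (which of these rows actually occurs is then governed only by the parity of $m-1$), not to the small set you identify. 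With your stated constraint, a direct count (e.g.\ $e=4$, $\ell(k)=5$ gives $4$ rather than $3\ell(k)-2=13$; $e=4$, $m=6$ gives $7$ rather than $3m-3=15$) does not reproduce the table, so the proof as written cannot be carried through; the fix is simply to use $h\le\ell(k)-1$ throughout and then redo the $h=m-1$ correction for all $g\in\{e,e\pm 2\}$ with $g\ge 2$, which does telescope to the stated values.
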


\begin{proof} 
By Theorem~\ref{thm:alesker-bernig-schuster} and Proposition~\ref{prop:table_4}  we have 
that for $e\ge 3$, and $0<\ell(k)<m$,  the multiplicity of $\{ 1;e-1\}$ in $\mathrm{Val}_{k,\C}\downarrow^{\OO(2m)}_{\U(m)}$ is 
\[1+2\cdot \#\{h\colon 0< h\le \ell(k)-1,\ 2\mid h\}+
2\cdot \#\{h\colon  h\le \ell(k)-1,\ 2\nmid h\}=2\ell(k)-1, \] 
whereas the multiplicity of  $\{ 1;1\}$ in $\mathrm{Val}_{k,\C}\downarrow^{\OO(2m)}_{\U(m)} 
$ is 
\[1+\#\{h\colon 0<h\le \ell(k)-1,\ 2\mid h\}
+2\cdot \#\{h\colon  h\le \ell(k)-1,\ 2\nmid h\}=\ell(k)+\left\lfloor \frac{\ell(k)}2\right\rfloor.\] 
Similar considerations yield the rest of the table. 
\end{proof} 

\begin{proposition}
\label{prop:table_6}
The dimensions of the spaces of $\U(m)$-module homomorphisms 
from $[e]\downarrow^{\OO(2m)}_{\U(m)}$ to $\mathrm{Val}_{k,\C}$  
($e\geq 0$; $0\le k\le 2m$)  are the following (for greater legibility, the notation $\ell:=\ell(k)$ is used in some places): 
\begin{equation*}
\begin{array}{cc||c}
 &  & \dim_{\C}(\mathrm{Hom}_{\U(m)}( [e],\mathrm{Val}_{k,\C})) \\ 
\hline \hline 
 e=0, & \ell(k)>0 & 1+\lfloor \frac{\ell}{2}\rfloor \\
 e=1, & \ell(k)>0 & 2\lfloor \frac{\ell}{2} \rfloor \\
 e=2, & 0<\ell(k)<m & 3\ell+\lfloor \frac{\ell}{2} \rfloor \\
 e\ge 3, & 0<\ell(k)<m &  3\ell e-3\ell-2e+4  \\  
e=2, & k=m  & 3m+\lfloor\frac{m}{2}\rfloor -1 \\ 
e\ge 3, & k=m & 3me-3m-3e+5 \\ 
e=0 & \ell(k)=0 & 1 \\
e\neq 0 & \ell(k)=0 & 0
\end{array}
\end{equation*} 
\end{proposition}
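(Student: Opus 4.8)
The plan is to reduce the computation of $\dim_{\C}(\mathrm{Hom}_{\U(m)}([e]\downarrow^{\OO(2m)}_{\U(m)},\mathrm{Val}_{k,\C}))$ to the multiplicity data already assembled. By Lemma~\ref{lemma:harmonic-decomp}, as a $\U(m)$-module $[e]\downarrow^{\OO(2m)}_{\U(m)}=\sum_{i+j=e}\{j;i\}$, and each summand $\{j;i\}$ lies in $\tau_{\U(m)}$ (these are genuine irreducibles, not formal combinations requiring King's modification rules). Note that $\{j;i\}$ and $\{i;j\}$ are non-isomorphic when $i\neq j$, but $\overline{\mathrm{Val}_{k,\C}}$ is self-dual (indeed it is the complexification of a real representation, as is $\Sb_\C^d(\R^{2m})$), so the multiplicity of $\{j;i\}$ in $\mathrm{Val}_{k,\C}$ equals that of its dual $\{i;j\}$. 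Hence by Schur's Lemma,
\[
\dim_{\C}(\mathrm{Hom}_{\U(m)}([e],\mathrm{Val}_{k,\C}))=\sum_{\substack{i+j=e\\ i\ge j}}(2-\delta_{ij})\,m_{j,i},
\]
where $m_{j,i}$ is the multiplicity of $\{j;i\}$ in $\mathrm{Val}_{k,\C}\downarrow^{\OO(2m)}_{\U(m)}$ read off from the table in Theorem~\ref{prop:table_5}, and $\delta_{ij}$ accounts for the fact that the self-paired term $\{f;f\}$ (possible only when $e=2f$ is even) contributes only once.

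Next I would simply carry out this sum, splitting into cases according to the parity of $e$ and the size of $e$ relative to the small thresholds $e=0,1,2$ where the table has special entries, and according to whether $\ell(k)=0$, $0<\ell(k)<m$, or $k=m$. The cases $\ell(k)=0$ (i.e.\ $k\in\{0,2m\}$) are immediate: only $\{0;0\}$ appears with multiplicity $1$, so the Hom-space is $1$-dimensional for $e=0$ (contributed by $\{0;0\}$) and $0$ for $e\geq 1$. For $0<\ell(k)<m$ and $e\ge 3$: the summands $\{j;e-j\}$ with $e-j\ge j\ge 2$ each have multiplicity $3\ell-2$ and there are $\lfloor e/2\rfloor-1$ of them, each counted with factor $2$; the summand $\{1;e-1\}$ has multiplicity $2\ell-1$, counted with factor $2$; the summand $\{0;e\}$ has multiplicity $\ell$, counted with factor $2$. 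Summing, $2[(\lfloor e/2\rfloor-1)(3\ell-2)+(2\ell-1)+\ell]$; one then checks this equals $3\ell e-3\ell-2e+4$ for both parities of $e$ (the apparent dependence on $\lfloor e/2\rfloor$ disappears because when $e$ is even the self-paired term $\{e/2;e/2\}$ would be one of the $\{j;e-j\}$ only if $e/2\ge 2$, and careful bookkeeping of that diagonal term is exactly what makes the formula parity-independent). The cases $e=2$ and $e=1$ are handled by direct substitution using the columns $\{1;1\}$, $\{0;1\}$, $\{1;e-1\}|_{e=2}$, $\{0;e\}|_{e=2}$ of Theorem~\ref{prop:table_5}, giving $2\lfloor\ell/2\rfloor$ for $e=1$ (just $2\cdot\lfloor\ell/2\rfloor$ from $\{0;1\}$) and $3\ell+\lfloor\ell/2\rfloor$ for $e=2$ (from $2\cdot\ell$ for $\{0;2\}$ plus $\ell+\lfloor\ell/2\rfloor$ for the self-paired $\{1;1\}$). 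The $k=m$ rows are identical in structure, with $\ell$ replaced throughout by $m$ and the multiplicity of $\{j;e-j\}$ being $3m-3$ rather than $3\ell-2$ — this one-unit discrepancy propagates to the stated $3me-3m-3e+5$ and to the $e=2$ value $3m+\lfloor m/2\rfloor-1$.

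The only subtle point — and the place where an error would most naturally creep in — is the treatment of the self-paired irreducible $\{f;f\}$ when $e=2f$: it must be counted with multiplicity factor $1$, not $2$, in the Hom-space, whereas all other $\{j;i\}$ with $i>j$ contribute their multiplicity twice (once as $\{j;i\}$, once as the isomorphic-to-nothing-else but dual-paired $\{i;j\}$, the two being swapped by the self-duality of $\mathrm{Val}_{k,\C}$ yet still non-isomorphic $\U(m)$-modules, hence each a separate Schur summand). Keeping this diagonal contribution straight across the parity split is what ultimately produces clean, parity-free closed formulas, and verifying that cancellation for each table row is the main piece of bookkeeping. Everything else is direct substitution into Theorem~\ref{prop:table_5} followed by elementary algebra.
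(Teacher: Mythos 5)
Your overall approach is the paper's own: decompose $[e]\downarrow^{\OO(2m)}_{\U(m)}$ multiplicity-freely as $\sum_{i+j=e}\{j;i\}$ via Lemma~\ref{lemma:harmonic-decomp}, invoke Schur's Lemma, and sum the relevant multiplicities from Theorem~\ref{prop:table_5} using $m_{j,i}=m_{i,j}$. The only deviation is in justifying that symmetry: the paper appeals to~\eqref{eq:2-king-branching} together with $c^\lambda_{\mu,\nu}=c^\lambda_{\nu,\mu}$, whereas you invoke self-duality of $\Val_{k,\C}$ as the complexification of a real representation and the fact that $\{i;j\}$ is the $\U(m)$-dual of $\{j;i\}$. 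Both are valid and equivalent, and your formula $\sum_{i+j=e,\ i\ge j}(2-\delta_{ij})\,m_{j,i}$, together with your verifications for $\ell=0$ and $0<\ell<m$, is correct.

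Your treatment of the $k=m$ rows, however, is wrong where it matters. You assert that the $k=m$ case arises from the $0<\ell<m$ case by substituting $\ell\mapsto m$ and changing \emph{only} the multiplicity of $\{j;e-j\}$ (for $2\le j\le e-j$) from $3m-2$ to $3m-3$. With those multiplicities ($m$ for $\{0;e\}$, $2m-1$ for $\{1;e-1\}$, $3m-3$ for the rest) one gets $3me-3m-3e+7$ when $e\ge 3$, not $3me-3m-3e+5$, and the $e=2$ row is a clean counterexample: it contains no $\{j;e-j\}$ with $j\ge 2$ at all, so your ``propagation'' cannot produce the $-1$ in $3m+\lfloor m/2\rfloor-1$. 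The actual mechanism, established in Proposition~\ref{prop:table_4} via Lemma~\ref{lemma:hookremoval}, is that King's modification rule subtracts~$1$ from the multiplicity of \emph{every} $\{j;i\}$ with $i\ge 1$ \emph{and} $j\ge 1$ relative to the naive $\ell\mapsto m$ substitution, and leaves $\{0;e\}$, $\{e;0\}$ untouched. For $[e]$ this reduces the Hom-dimension by $e-1$ (the number of summands $\{j;e-j\}$ with $1\le j\le e-1$), turning $3me-3m-2e+4$ into $3me-3m-3e+5$ for $e\ge 3$ and producing the $-1$ for $e=2$. A further pitfall you would hit when actually substituting: Theorem~\ref{prop:table_5} as printed carries a typo in its $k=m$ row, where the multiplicity of $\{0;e\}$ ($e\ge 2$) should be $m$, not $m-1$ (with $j=0$ there is no modification). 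Reading the table literally yields $3me-3m-3e+3$ for $e\ge3$, $k=m$, contradicting both this Proposition and Theorem~\ref{thm:dimensions}; one can cross-check the corrected value $m$ against the $d=2$ row of Proposition~\ref{prop:table_7}.
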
 

\begin{proof} 
By Lemma~\ref{lemma:harmonic-decomp}, the $U(m)$-module $[e]\downarrow^{\OO(2m)}_{\U(m)}$ 
is multiplicity free, and it is the direct sum of the modules $\{j;e-j\}$. 
Therefore by Schur's Lemma,  
$\dim_{\C}(\mathrm{Hom}_{\U(m)}([e],\mathrm{Val}_{k,\C}))$  
equals the sum of the multiplicities of the irreducible $\U(m)$-module direct summands 
$\{ j;e-j\}$ of 
$\mathrm{Val}_{k,\C}\downarrow^{\OO(2m)}_{\U(m)}$. 
These sums of multiplicities can be easily determined using 
Theorem~\ref{prop:table_5}, and taking into account that the multiplicity of $\{ j;i\}$ in $\mathrm{Val}_k\downarrow^{\OO(2m)}_{\U(m)}$ 
equals the multiplicity of $\{ i; j\}$  by \eqref{eq:2-king-branching} and by $c_{\mu,\nu}^\lambda=c_{\nu,\mu}^\lambda$. 
\end{proof} 

\begin{proposition}
\label{prop:table_7}
The dimensions of the spaces 
$\mathrm{Hom}_{\U(m)}( \Sb_{\C}^d(\R^{2m}),\mathrm{Val}_{k,\C})$ 
($d\geq 0$; $0\le k\le 2m$)  
are the following (we shall use the notation 
$f:=\lfloor \frac d2 \rfloor$ and $\ell=\ell(k)$): 

 \begin{equation*}
\begin{array}{cc||c}
 & & \dim_{\C}(\mathrm{Hom}_{\U(m)}( \Sb_{\C}^d(\R^{2m}),\mathrm{Val}_{k,\C}))\\ \hline \hline 
d=0 &  & 1+\lfloor \frac{\ell}2\rfloor \\
d=2f>0, & \ell=0 & 1 \\
d=2f>0, & 1\le \ell<m & 3\ell f^2+2\lfloor \frac{\ell}{2} \rfloor-2f^2+2f+1 \\
d=2f>0, & \ell=m & 3mf^2+2\lfloor \frac m2 \rfloor -3f^2+2f+1 \\
d=2f+1, & \ell=0 & 0 \\
d=2f+1, & 1\le \ell<m & 3\ell f^2+3\ell f+2\lfloor \frac{\ell}{2}\rfloor-2f^2 \\
d=2f+1, & \ell=m &  3mf^2+3mf + 2\lfloor \frac m2\rfloor-3f^2-f
\end{array} 
\end{equation*} 
\end{proposition}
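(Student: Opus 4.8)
The plan is to combine the decomposition \eqref{eq:symmpower-decomp} of $\Sb_\C^d(\R^{2m})$ as an $\OO(2m)$-module with the Hom-dimensions already computed in Proposition~\ref{prop:table_6}. Since by \eqref{eq:symmpower-decomp} we have $\overline{\Sb_\C^d(\R^{2m})}=[d]+[d-2]+[d-4]+\cdots$ as $\OO(2m)$-modules, and restriction to $\U(m)$ is additive, the functor $\mathrm{Hom}_{\U(m)}(-,\Val_{k,\C})$ applied to this decomposition gives
\[
\dim_\C\mathrm{Hom}_{\U(m)}(\Sb_\C^d(\R^{2m}),\Val_{k,\C})
=\sum_{\substack{0\le e\le d\\ e\equiv d\ (2)}}\dim_\C\mathrm{Hom}_{\U(m)}([e],\Val_{k,\C}),
\]
where the summands on the right are exactly the quantities tabulated in Proposition~\ref{prop:table_6}. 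So the whole proof reduces to evaluating these finite arithmetic sums in each of the seven cases of the statement.

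First I would dispose of the degenerate cases. For $d=0$ the sum has the single term $e=0$, giving $1+\lfloor\ell/2\rfloor$. For $\ell(k)=0$ (i.e. $k\in\{0,2m\}$), Proposition~\ref{prop:table_6} says the $e=0$ term contributes $1$ and all other terms contribute $0$; hence the total is $1$ if $d$ is even and $0$ if $d$ is odd, matching the rows $d=2f>0,\ \ell=0$ and $d=2f+1,\ \ell=0$.

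Next, for the generic cases $1\le\ell<m$, I would split the sum according to the parity of $d$. If $d=2f>0$, the relevant values of $e$ are $0,2,4,\dots,2f$, so the total is $(1+\lfloor\ell/2\rfloor)$ (from $e=0$) plus $(3\ell+\lfloor\ell/2\rfloor)$ (from $e=2$) plus $\sum_{j=2}^{f}(3\ell\cdot 2j-3\ell-2\cdot 2j+4)$ (from $e=4,6,\dots,2f$, using the $e\ge3$ formula). If $d=2f+1$, the relevant values are $e=1,3,5,\dots,2f+1$, so the total is $2\lfloor\ell/2\rfloor$ (from $e=1$) plus $\sum_{j=1}^{f}(3\ell(2j+1)-3\ell-2(2j+1)+4)$ (from $e=3,5,\dots,2f+1$). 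The case $\ell=m$ is handled identically, replacing the $e=2$ and $e\ge3$ entries by their $k=m$ counterparts from Proposition~\ref{prop:table_6}; note that the $e=0,1$ entries are the same as in the $1\le\ell<m$ case with $\ell$ set to $m$. Each of these is a sum of an arithmetic progression in $j$ plus a constant number of terms, so it evaluates in closed form using $\sum_{j=1}^{f}j=f(f+1)/2$ and $\sum_{j=1}^{f}j^2$-free manipulations; collecting terms yields the expressions in the table (the $f^2$ terms come from the $3\ell\cdot 2j$ summand summed over $j$, which produces $3\ell f(f+1)\sim 3\ell f^2$).

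The only real subtlety — and the step most likely to need care — is bookkeeping the low-$e$ exceptions so they mesh with the generic arithmetic progression. Proposition~\ref{prop:table_6} gives separate formulas for $e=0$, $e=1$, $e=2$, and $e\ge3$, and one must check that in each parity class the "$e=2$ special" term together with the "$e\ge3$ generic" terms telescopes correctly into the stated polynomial in $f$; in particular one should verify that the $e=2$ formula $3\ell+\lfloor\ell/2\rfloor$ is \emph{not} the value of the $e\ge3$ formula at $e=2$ (it is not: the latter would give $3\ell\cdot2-3\ell-4+4=3\ell$), so the $\lfloor\ell/2\rfloor$ discrepancy must be carried along and is precisely what accounts for the $+1$ and $2\lfloor\ell/2\rfloor$ vs.\ $2\lfloor\ell/2\rfloor$ pattern difference between the even and odd rows. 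Once this accounting is done, comparing with \eqref{eq:real_complex} and \eqref{eq:tensor_hom} is immediate and gives Theorem~\ref{thm:dimensions} as a corollary. Beyond this, everything is a routine finite summation.
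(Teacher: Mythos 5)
Your proposal is correct and follows essentially the same route as the paper: reduce via the harmonic decomposition $\overline{\Sb^d_{\C}(\R^{2m})}=[d]+[d-2]+\cdots$ to a sum of the Hom-dimensions tabulated in Proposition~\ref{prop:table_6}, then evaluate the resulting finite sums. The paper invokes multiplicity-freeness and Schur's Lemma where you simply use additivity of $\dim\mathrm{Hom}$ in the first argument, but this is a cosmetic difference; your explicit bookkeeping of the $e=0,1,2$ exceptional terms versus the $e\ge 3$ generic term is exactly the calculation the paper leaves to the reader with ``follows easily.''
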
  

\begin{proof} 
The $\U(m)$-module $\Sb_{\C}^d(\R^{2m})$ is multiplicity free by 
\eqref{eq:symmpower-decomp} and  by  Lemma~\ref{lemma:harmonic-decomp}. 
Therefore by Schur's Lemma and by \eqref{eq:symmpower-decomp} we have 
\begin{equation*}
\dim_{\C}(\mathrm{Hom}_{\U(m)}( \Sb_{\C}^d(\R^{2m}),\mathrm{Val}_{k,\C})
=\sum_{p=0}^{\lfloor \frac d2\rfloor}\dim_{\C}(\mathrm{Hom}_{\U(m)}([d-2p],\mathrm{Val}_{k,\C})). 
\end{equation*} 
Thus the statement  follows easily from Proposition~\ref{prop:table_6}. 
\end{proof} 

\begin{proofof}{Theorem~\ref{thm:dimensions}} The result is an immediate consequence 
of Proposition~\ref{prop:table_7} by \eqref{eq:real_complex} and
\eqref{eq:tensor_hom}. 
\end{proofof} 

\begin{remark}\label{remark:odd-even}  \mbox{}
\begin{itemize} 
\item[(i)] The case $d=0$ in Proposition~\ref{prop:table_7}, i.e. the  equality 
\[\dim_{\C}((\mathrm{Val}_{k,\C})^{\U(m)})=1+\left\lfloor \frac{\ell(k)}2 \right\rfloor\]  
is due to Alesker \cite[Theorem 6.1]{Ale01}. 
\item[(ii)] The special case $d=1$ in Proposition~\ref{prop:table_7}, namely that  
\[\dim_{\C}((\mathrm{Val}_{k,\C}\otimes \Sb^1_\C(\R^{2m})^{\U(m)}))=2\left\lfloor \frac{\ell(k)}2\right\rfloor,\]  
is due to Wannerer \cite{wannerer}, see also  
\cite[Theorem  6.14]{schuster}.  
\end{itemize}  
\end{remark}

\section{A basis for vector valued valuations in terms of area measures}
\label{secVecbasis}
In this section we construct a $\C$-vector space basis of 
$(\Val\otimes_\R \C^m)^{\U(m)}$, the complex vector space of $\U(m)$-equivariant translation invariant vector valued valuations on $\C^m$ for $m\ge 2$.

In the scalar valued case, Bernig and Fu \cite{Ber-Fu} constructed a basis of $\Val^{\U(m)}$ consisting of the so-called hermitian intrinsic volumes $\mu_{k,q}$, defined for $0\leq k\leq 2m$ and $0,k-m\leq q\leq \frac{k}2$. 
These valuations are even and hence characterized by their Klain function \cite{Klain2000}. The Klain function of an even valuation $\varphi\in \Val_k(\V)$ is a function $\mathrm{Kl}_\varphi$ on the $k$-Grassmannian $\mathrm{Gr}_k(\V)$ given by $\varphi(A)=\mathrm{Kl}_\varphi(E)\mathrm{vol}_k(A)$ for $A\subset E\in \mathrm{Gr}_k(\V)$.

For $k\leq m$, the  Klain function of $\mu_{k,q}$ is
\begin{equation}\label{klain_mu}
 \mathrm{Kl}_{\mu_{k,q}}(E) =\sum_{i=q}^{\lfloor\frac{k}{2}\rfloor} (-1)^{i+q}{i \choose q}\sigma_i(\cos^2\theta_1,\ldots, \cos^2\theta_{\lfloor k/2\rfloor})
\end{equation}
where $\theta_i$ is the $i^\mathrm{th}$ elementary symmetric function, and $\theta_1,\ldots,\theta_{\lfloor k/2\rfloor}$ are the K\"ahler angles of the $k$-dimensional linear subspace $E$. These angles are characterized as follows. Let $\psi_E$ be the endomorphism of $E$ that maps $u\in E$ to the orthogonal projection of $\sqrt{-1}u$ to $E$. Then $\psi_E$  has eigenvalues $$\pm\sqrt{-1}\cos\theta_1,\ldots, \pm\sqrt{-1}\cos\theta_{\lfloor k/2\rfloor},$$ plus a zero eigenvalue if $k$ is odd. For $k>m$, 
\begin{equation}\label{klain_mu_2}
 \mathrm{Kl}_{\mu_{k,q}}(E)=\mathrm{Kl}_{\mu_{2m-k,m-k+q}}(E^\bot).
\end{equation}

On the other hand, Wannerer \cite{wannerer14} introduced the space $\Area(\V)$ of smooth area measures on a  euclidean vector space $\V$. These are certain translation invariant valuations on $\V$, taking values on the space of signed measures of the unit sphere $S(\V)$. Thus, if $\Phi\in \Area(\V)$ and $A\subset \V$ is a convex body, then $\Phi(A,\cdot)$ is a signed measure on $S(\V)$.  The globalization map $\mathrm{glob}\colon \mathrm{Area}(\V)\to \Val(\V)$  and the centroid map $C\colon \mathrm{Area}(\V)\to \Val(\V)\otimes \V$ are then defined by
\[
 \mathrm{glob}({\Phi})(A)=\Phi(A,S(\V)),\qquad C(\Phi)(A)=\int_{S(\V)} u\ d\Phi(A,u).
\] 

Given a linear subspace $E\subset \V$ there exists a restriction map $r$ from $\mathrm{Area}(\V)$ to $\mathrm{Area}(E)$ characterized as follows. Given a Borel set $U\subset S(V)$, let $\overline U=(U+E^\bot)\cap S(\V)$. The restriction of $\Phi\in\mathrm{Area}(\V)$ is given by
\begin{equation}\label{restriction}
r(\Phi)(A,U)=\Phi(A,\overline U),\qquad A\in\K(E), U\subset S(E). 
\end{equation}

For $\V=\C^m$,  the space $\mathrm{Area}_k^{\U(m)}$ of $k$-homogeneous $\U(m)$-invariant smooth area measures was described in \cite{wannerer14}. We will need the following.
\begin{proposition}[\cite{wannerer14}]\label{deltas}
 Given $0\leq k < 2m$, there exists a family $\Delta_{k,q}\in\Area_k^{\U(m)}$ with $0,k-m\leq q\leq \frac{k}2$ such that \begin{enumerate}
 \item[(i)] $\mathrm{glob}(\Delta_{k,q})=\mu_{k,q}$
 \item[(ii)] for every polytope $P$, and every Borel set $U\subset S(\C^m)$
  \begin{equation}\label{angular}
   \Delta_{k,q}(P, U)=\sum_{F\in\mathcal F_k}\mathrm{Kl}_{\mu_{k,q}}(\vec F) \frac{\mathrm{vol}_{2m-k-1}(N(P,F)\cap U)}{\mathrm{vol}_{2m-k-1}(S^{2m-k-1})} \mathrm{vol}_k(F)
  \end{equation}
where $\mathcal F_k$ is the set of $k$-dimensional faces, $N(P,F)$ is the set of outer unit normal vectors to $P$ at points of $F$, and $\vec F$ is the $k$-dimensional linear subspace parallel to $F$.
 \item[(iii)] \label{it:res_deltas}The restriction $r\colon \mathrm{Area}(\C^{m+l})\to \mathrm{Area}(\C^m)$ corresponding to the inclusion $\C^m\to\C^{m+l}$ fulfills $r(\Delta_{k,q})=\Delta_{k,q}$ if $q\geq k-m$.
\end{enumerate}
\end{proposition}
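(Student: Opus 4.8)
\textbf{Proof plan for Proposition \ref{deltas}.}

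The plan is to construct the $\Delta_{k,q}$ explicitly by formula \eqref{angular} on polytopes and then extend to a well-defined smooth area measure, deducing properties (i) and (iii) from the construction. First I would take equation \eqref{angular} as the \emph{definition} of $\Delta_{k,q}(P,\cdot)$ on the set of all polytopes $P\subset\C^m$, with $\mathrm{Kl}_{\mu_{k,q}}$ given by \eqref{klain_mu} and \eqref{klain_mu_2}. One checks this is a finitely additive signed measure on $S(\C^m)$ for each fixed $P$ (it is a locally finite weighted sum of spherical Lebesgue measures on the normal cones $N(P,F)$), and that $P\mapsto \Delta_{k,q}(P,\cdot)$ is a translation invariant, $k$-homogeneous valuation on polytopes. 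The $\U(m)$-invariance is inherited from the $\U(m)$-invariance of the Klain function $\mathrm{Kl}_{\mu_{k,q}}$: for $\varphi\in\U(m)$, the map $F\mapsto\varphi F$ is a bijection of $k$-faces sending $N(P,F)$ to $N(\varphi P,\varphi F)$ and $\vec F$ to $\varphi\vec F$, while $\mathrm{Kl}_{\mu_{k,q}}(\varphi E)=\mathrm{Kl}_{\mu_{k,q}}(E)$ because the K\"ahler angles are $\U(m)$-invariants of $E$.

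Next I would verify property (i). Globalizing \eqref{angular}, i.e. setting $U=S(\C^m)$, gives
\[
\mathrm{glob}(\Delta_{k,q})(P)=\sum_{F\in\mathcal F_k}\mathrm{Kl}_{\mu_{k,q}}(\vec F)\,\frac{\mathrm{vol}_{2m-k-1}(N(P,F))}{\mathrm{vol}_{2m-k-1}(S^{2m-k-1})}\,\mathrm{vol}_k(F),
\]
and I would recognize the right-hand side as the local-cone expression for an even translation invariant $k$-homogeneous valuation whose Klain function is exactly $\mathrm{Kl}_{\mu_{k,q}}$: restricting to $P\subset E$ with $E\in\mathrm{Gr}_k$, only the single face $F=P$ contributes, with $N(P,F)$ a hemisphere-type set of full normalized measure $1$, yielding $\mathrm{glob}(\Delta_{k,q})(P)=\mathrm{Kl}_{\mu_{k,q}}(E)\mathrm{vol}_k(P)$. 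Since even valuations are determined by their Klain function \cite{Klain2000} and $\mu_{k,q}$ is defined by \eqref{klain_mu}--\eqref{klain_mu_2} to have this very Klain function, we get $\mathrm{glob}(\Delta_{k,q})=\mu_{k,q}$. For property (iii), I would apply the definition \eqref{restriction} of the restriction $r$: for $A\in\K(\C^m)$ a polytope and $U\subset S(\C^m)$, expand $\Delta_{k,q}$ on $\C^{m+l}$ via \eqref{angular}, note that $\overline U=(U+(\C^m)^\bot)\cap S(\C^{m+l})$ picks out exactly the $k$-faces $F$ of $A$ (viewed in $\C^{m+l}$) with $\vec F\subset\C^m$, that for such faces $N_{\C^{m+l}}(A,F)\cap\overline U$ is, up to the normalizing sphere-volume ratio, the same as $N_{\C^m}(A,F)\cap U$, and that the K\"ahler angles of $\vec F\subset\C^m\subset\C^{m+l}$ are the same computed in either space; the hypothesis $q\ge k-m$ ensures the index $q$ is still admissible for the $m$-dimensional family so $\mathrm{Kl}_{\mu_{k,q}}$ is given by the same formula \eqref{klain_mu}. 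Hence $r(\Delta_{k,q})=\Delta_{k,q}$.

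The main obstacle is showing that the polytopal formula \eqref{angular} actually extends to a \emph{smooth} area measure in the sense of \cite{wannerer14} (so that $\Delta_{k,q}$ genuinely lies in $\Area_k^{\U(m)}$), rather than merely a valuation on polytopes. Here I would invoke the structure theory of smooth area measures: every smooth translation invariant area measure is given on polytopes by a formula of the form \eqref{angular} with the Klain-type density replaced by an arbitrary smooth function on the partial flag manifold, and conversely such a formula with a smooth density extends (continuously, and compatibly with weak convergence of measures) to all convex bodies. Since $\mathrm{Kl}_{\mu_{k,q}}$ pulls back to a smooth function on the relevant flag manifold (it is a polynomial in $\cos^2\theta_i$, and the K\"ahler angles depend smoothly on $E\in\mathrm{Gr}_k$), the extension exists and is $\U(m)$-invariant and $k$-homogeneous by continuity from the polytopal case. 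An alternative, and perhaps cleaner, route is to define $\Delta_{k,q}$ intrinsically as a constant-coefficient invariant differential form on the sphere bundle $S(\C^m)\times\C^m$ and only afterwards check that its evaluation on polytopes reproduces \eqref{angular}; with either approach the technical content is the identification of the smooth density with the Klain function.
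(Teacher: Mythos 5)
The paper does not prove Proposition~\ref{deltas}: it is imported verbatim from \cite{wannerer14}, so there is no in-paper argument to compare against. Judged on its own terms, your outline handles (i) and (iii) correctly \emph{granted} that a smooth area measure satisfying \eqref{angular} exists: the globalization step (for $P\subset E\in\mathrm{Gr}_k$ only the face $F=P$ contributes, with normalized normal-cone measure $1$) and the restriction step (the K\"ahler angles of $\vec F$ are intrinsic to $\vec F$, and $\mathrm{vol}(N_{\C^{m+l}}(A,F)\cap\overline U)/\mathrm{vol}(S^{2(m+l)-k-1})=\mathrm{vol}(N_{\C^{m}}(A,F)\cap U)/\mathrm{vol}(S^{2m-k-1})$ for $A\in\K(\C^m)$) are both sound.

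The gap sits exactly at the point you yourself flag as the main obstacle, and the ``structure theory'' you invoke to close it is not available. It is not true that every smooth translation invariant area measure is given on polytopes by a formula of type \eqref{angular} with a density depending only on $\vec F$: in general the density on $N(P,F)$ also depends on the normal direction $u$, and the measures for which it does not --- the \emph{angular} ones in Wannerer's terminology --- form a proper subspace of $\Area_k^{\U(m)}$. More importantly, the converse you need --- that an \emph{arbitrary} smooth function on the Grassmannian, prescribed as angular data via \eqref{angular}, extends to a smooth area measure on all convex bodies --- is not a theorem; deciding which angular data are realizable is precisely the nontrivial existence question, analogous to characterizing the image of the Klain embedding for scalar valuations. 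So taking \eqref{angular} as the definition on polytopes and asserting an extension assumes what must be proved. The viable route is the one you mention only as an alternative: construct $\Delta_{k,q}$ from an explicit invariant differential form on the sphere bundle (which is automatically a smooth, $\U(m)$-invariant, $k$-homogeneous area measure) and then \emph{prove} (ii), i.e.\ angularity with density $\mathrm{Kl}_{\mu_{k,q}}$, by an explicit local Steiner/first-variation computation on polytopes; (i) and (iii) then follow as you describe. That computation is the substantive content of the proof in \cite{wannerer14} and is absent from your outline.
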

Given a $p$-dimensional real subspace  $E\subset \C^m$ and  the corresponding restriction map $r$, it follows  from \eqref{angular} that 
\begin{equation}\label{eq:centroid_restriction}
C(r(\Delta_{k,q}))(A)=c_{m,p,k}C(\Delta_{k,q})(A),\qquad A\in\K(E),
\end{equation}
for $c_{m,p,k} \neq 0$ depending only on $m,p,k$.

It  was shown in \cite{wannerer14} that the family $C(\Delta_{k,q})$  with $0,k-m< q\leq \frac{k}2$ is $\R$-linearly independent. Since we already know that 
$\dim_\C((\mathrm{Val}_k\otimes_{\R}\C^m)^{\U(m)})=\lfloor \frac{\ell(k)}{2}\rfloor$ 
(see Remark~\ref{remark:odd-even} (ii)), 
to prove Theorem~\ref{thm:Vecbasis} we only need to show that the above family is in fact $\C$-linearly independent. 

\smallskip
\begin{proofof}{Theorem~\ref{thm:Vecbasis}}
Let us first assume $k\geq m$. For $1\leq r\leq \lfloor\frac{2m-k}{2}\rfloor$ consider the following element of $\mathrm{Area}_k^{\U(m)}$: 
\[
\Psi_{k,r}=\sum_{i=r}^{\lfloor\frac{2m-k}2\rfloor} {i\choose r} \Delta_{k,k-m+i}.
\]
By \eqref{klain_mu}  and \eqref{klain_mu_2} we have 
\[
\mathrm{Kl}_{\mathrm{glob} (\Psi_{k,r})}(F)= \sigma_r(\cos^2\theta_1,\ldots, \cos^2\theta_{ \lfloor {2m-k\over 2}\rfloor}).
\]  where the $\theta_i$ refer to the K\"ahler angles of $F^\bot$.

We show next that the vector valuations $C(\Psi_{k,r})$, and hence the $C(\Delta_{k,q})$ with $q>k-m$, are linearly independent elements of the complex vector space 
$(\Val_k
\otimes_\R \C^m)^{\U(m)}$. Since this space has dimension  $\lfloor\frac{2m-k}2\rfloor$, the statement will follow.

Given $k+1-m\leq q\leq \frac{k+1}2$, we may consider 
\[
E=\C^q\oplus\R^{k-2q+1}
\]
which is a subspace of $\C^m$ with $\dim_\R E=k+1$. Let $T_q$ be the $(k+1)$-dimensional simplex in $E$ with vertices 
\[
0,e_1,  \sqrt{-1}e_1,\ldots, e_q,  \sqrt{-1}e_q,e_{q+1},\ldots, e_{k-q+1}
\]
We proceed to compute $C(\Psi_{k,r})(T_q)$. 

The linear spaces parallel to the $k$-faces $F$ of $T_q$ with normal vectors $u\in\{e_1, \sqrt{-1}e_1,\ldots, e_q, \sqrt{-1}e_q\}$ belong to the $\U(m)$-orbit of $\C^{q-1}\oplus\R^{k-2q+2}=(\C^{m-k+q-1}\oplus\R^{k-2q+2})^\bot$. Hence the K\"ahler angles $\theta_i$ of $F^\bot$ are given by
\[
\cos\theta_1=\cdots=\cos\theta_{m-k+q-1}=1,\quad \cos\theta_{m-k+q}=\cdots=\cos\theta_{\lfloor \frac{2m-k}2\rfloor}=0
\]
which yields
\[
\mathrm{Kl}_{\mathrm{glob}(\Psi_{k,r})}(F)={m-k+q-1 \choose r}.
\]
Similarly, the linear spaces parallel to the $k$-faces $F$ of $T_q$ with normal vectors $u\in\{e_{q+1},\ldots, e_{k-q+1}\}$ belong to the $\U(m)$-orbit of $\C^{q}\oplus\R^{k-2q}=(\C^{m-k+q}\oplus\R^{k-2q})^\bot$. Hence the K\"ahler angles $\theta_i$ of  $F^\bot$ are given by
\[
\cos\theta_1=\cdots=\cos\theta_{m-k+q}=1,\quad \cos\theta_{m-k+q+1}=\ldots=\cos\theta_{\lfloor \frac{2m-k}2\rfloor}=0
\]
which yields
\[
\mathrm{Kl}_{\mathrm{glob}(\Psi_{k,r})}(F)={m-k+q \choose r}.
\]
It remains to consider the $k$-face $F$ of $T_q$ opposite to $0$. Its outer unit  normal vector in $E$ is 
\[v=\frac{1}{\sqrt{k+1}}(e_1+\sqrt{-1}e_1+\cdots +e_q+\sqrt{-1}e_q+e_{q+1}+\cdots +e_{k-q+1})\] 
and its normal space $F^\bot$ in $\C^m$ is spanned by 
\[v, \sqrt{-1}e_{q+1},\ldots,  \sqrt{-1}e_{k-q+1}, e_{k-q+2}, \sqrt{-1}e_{k-q+2},\ldots, e_m, \sqrt{-1}e_m.\] 
The K\"ahler angles $\theta_i$ of $F^\bot$ can be obtained from the eigenvalues of $\psi_{F^\bot}$. The outcome of this computation is
\[
\cos\theta_1=\cdots=\cos\theta_{m-k+q-1}=1,\quad\cos\theta_{m-k+q}=\sqrt\frac{k-2q+1}{k+1} 
\]
and $\cos\theta_i=0$ for $i>m-k+q$, which yields
\[
\mathrm{Kl}_{\mathrm{glob}(\Psi_{k,r})}( F)={m-k+q-1\choose r}+{m-k+q-1 \choose r-1}\frac{k-2q+1}{k+1}.
\]
Finally, by \eqref{angular}   and \eqref{eq:centroid_restriction}
\begin{align*}
c_{m,k+1,k}&C(\Psi_{k,r})(T_q)=\frac{1}{2\cdot k!}{m-k+q-1\choose r}(-e_1-\sqrt{-1}e_1-\ldots-e_q-\sqrt{-1}e_q)\\
&+\frac{1}{2\cdot k!}{m-k+q \choose r}(-e_{q+1}-\ldots-e_{k-q+1})\\
&+\frac{\sqrt{k+1}}{2\cdot k!}\left[{m-k+q-1\choose r}+{m-k+q-1 \choose r-1}\frac{k-2q+1}{k+1}\right]v\\
&={m-k+q-1\choose r-1}\frac{1}{2(k+1)!}\Big[(k-2q+1)(e_1+\sqrt{-1}e_1+\ldots +\sqrt{-1}e_q)\\&-2q(e_{q+1}+\ldots+e_{k-q+1})\Big]
\end{align*}

Suppose now that  $\sum_i a_i C(\Psi_{k,i})=0$ for $a_1,\ldots, a_{\lfloor \frac{2m-k}2\rfloor}\in\C$. For each $i=1,\ldots,\lfloor\frac{2m-k}2\rfloor$, pick $q=k-m+i$. By the above expression we get \[
C(\Psi_{k,r})(T_q)=0 \Leftrightarrow r> i.\] By induction this shows $a_1=\cdots=a_{\lfloor\frac{2m-k}2\rfloor}=0$. 

The case $k<m$ can be deduced from the previous one as follows. Suppose
\begin{equation}\label{eq:case2}\sum_{i=1}^{\lfloor k/2\rfloor} a_i C(\Delta_{k,i})=0\end{equation} with $a_i\in\C$. Consider the inclusion $\C^k\subset \C^m$. By item (iii) in Proposition \ref{deltas} and \eqref{eq:centroid_restriction}, the relation \eqref{eq:case2} holds in $\C^k$. Hence, we can apply the previous case to conclude $a_i=0$ for all $i$.
\end{proofof}

\bigskip
\noindent {\bf Acknowledgement: } We would like to heartily thank Semyon Alesker, Andreas Bernig, Daniel Hug, Endre Szab\'o, Thomas Wannerer for enlightening discussions.


\end{document}